\definecolor{green6}{rgb}{0.0,0.6,0}
 \definecolor{purple6}{rgb}{0.6,0,0.4}
 \definecolor{blue8}{rgb}{0,0.36,0.6}
 \definecolor{blue6}{rgb}{0.0,0.27,0.45}
 \definecolor{blue4}{rgb}{0.0,0.18,0.3}
 \definecolor{blue2}{rgb}{0.0,0.0,0.2}
 \definecolor{red8}{rgb}{0.6,0,0.36}
 \definecolor{red6}{rgb}{0.0,0.27,0.45}
 \definecolor{red4}{rgb}{0.0,0.18,0.3}
 \definecolor{red2}{rgb}{0.0,0.0,0.2}
\newcommand{\R}{\mathbb{R}}
\newcommand{\qo}{\text{ a.e.~}}
\newcommand{\wh}{\widehat}
\newcommand{\wxo}{\wh x_0}
\newcommand{\wpsi}[1]{\wh\psi_{#1}}
\newcommand{\vecc}[1]{\overrightarrow{#1}}
\newcommand{\sss}{\wh s}
\newcommand{\sg}{\mathfrak{s}}
\newcommand{\beps}{\boldsymbol{\varepsilon}}
\newcommand{\bk}{\boldsymbol{\theta}}
\newcommand{\wI}{\widehat{I}}
\newcommand{\wbx}{\widehat{\boldsymbol{x}}}
\newcommand{\wxi}{\widehat{\xi}}
\newcommand{\wu}{\widehat{u}}
\newcommand{\wla}{\widehat{\lambda}}
\newcommand{\wtau}{\widehat{\tau}}
\newcommand{\well}{\widehat{\ell}}
\newcommand{\cH}{{\mathcal H}} 
\newcommand{\cK}{{\mathcal K}} 
\newcommand{\cHref}{\wh{\mathcal H}} 
\newcommand{\vF}[1]{\overrightarrow{F_{#1}}} %
\newcommand{\vGse}[1]{\overrightarrow{G\se_{#1}}} %
\newcommand{\vH}[1]{\overrightarrow{H_{#1}}} %
\newcommand{\vPsi}{\overrightarrow{\Psi}} %
\newcommand{\se} {^{\prime \prime}} %
\newcommand{\dl}{{\delta\ell}} %
\newcommand{\de}{{\delta e}} %
\newcommand{\dep}{\delta p} %
\newcommand{\dx}{{\delta x}} %
\newcommand{\dy}{\delta y} %
\newcommand{\lref}{\widehat\ell}
\newcommand{\lo}{{\lref_0}} %
\newcommand{\wS}[1]{\widehat S_{#1}} %
\newcommand{\wSinv}[1]{\widehat S^{-1}_{#1}} %
\newcommand{\liebr}[2]{\left[  {#1}, {#2} \right] } %
\newcommand{\liede}[3]{ L_{#1}{#2} \left( {#3}\right) } %
\newcommand{\lieder}[2]{L_{#1}{#2}(\wbx_0)} %
\newcommand{\liederder}[3]{ L_{#1}L_{#2}{#3}(\wbx_0)  } %
\newcommand{\liederdo}[2]{ L^2_{#1}{#2} \left( \wbx_0 \right) } %
\newcommand{\abs}[1]{\left\vert {#1} \right\vert} %
\newcommand{\scal}[2]{\langle {#1} \, , \, {#2} \rangle} %
\newcommand{\dueforma}[2]{{\boldsymbol\sigma}\left( {#1}, {#2} \right) } %
\newcommand{\ep}{\varepsilon}
\newcommand{\cU}{{\mathcal U}} %
\newtheorem{hypo}{Assumption}
\begin{document}

\title{Strong Local Optimality for Generalized L1 Optimal Control Problems}

\author{Francesca C. Chittaro   \and  Laura Poggiolini }

\institute{Francesca C. Chittaro \at
            Aix Marseille Univ, Universit\'e de Toulon, CNRS, LIS, Marseille, France 
              francesca.chittaro@univ-tln.fr  
           \and
           Laura Poggiolini,  Corresponding author  \at
           Dipartimento di Matematica e Informatica ``Ulisse Dini'', 
           Universit\`a degli Studi di Firenze,\\
           via di Santa Marta  3,  50139 Firenze, Italy
           laura.poggiolini@unifi.it
}

\date{Received: date / Accepted: date
}

\maketitle

\begin{abstract}

In this paper, we analyze control affine optimal control problems with a cost functional involving the absolute value of the control.
The Pontryagin extremals associated with such systems are given by (possible) concatenations of bang arcs with singular arcs and with 
\emph{zero control arcs}, that is, arcs where the control is identically zero.
Here, we consider Pontryagin extremals given by a  bang-zero control-bang concatenation. 
We establish sufficient optimality conditions for such extremals, in terms of some regularity conditions and of the coerciveness of a 
suitable finite-dimensional second variation. 
\end{abstract}
\keywords{Optimal Control, sufficient conditions, Hamiltonian methods, sparse control}
\subclass{49K15 49K30}


\section{Introduction}

In recent years, optimal control problems aiming at minimizing the integral of the absolute value of the control 
have received an increasing attention: 
for instance, they model problems 
coming from neurobiology \cite{PLOS}, mechanics \cite{yacine,maurer-vossen}, and fuel-consumption \cite{boizot,craig,ross}.
As noticed in these papers, a peculiarity of 
optimal control problems containing the absolute value of the control in the running cost 
is the fact that the optimal control vanishes along nontrivial time intervals; this property  is referred to as \emph{sparsity}, 
and the piece of optimal trajectories corresponding to the zero control  are known as \emph{zero control arcs}, \emph {cost arcs} or \emph{inactivated  arcs}. 

The occurrence  of sparse controls in this kind of minimization problems is well known also in the framework of infinite-dimensional optimal control, 
see for instance \cite{clason-kunisch,kunisch-sparse,stadler}, where usually the absolute value of the control 
is added to the integral cost, in order to induce sparse solutions.

In this paper, we consider optimal control problems with single-input con\-trol-affine dynamics, a compact control set and a  running cost depending  linearly on the absolute value of the control. 
For such optimal control problems, a well known necessary condition is the Pontryagin Maximum Principle, possibly in its non-smooth version; see \cite{Clarke}.
Our aim is to provide sufficient conditions for the strong local optimality of an admissible trajectory-control pair satisfying the Pontryagin Maximum Principle.
A first announcement of these results, given without proofs, can be found in \cite{PoggChittCDC}.

Our approach relies on Hamiltonian methods, described in Section \ref{sec:Ham}; see also \cite{AgSac,SZ16,ASZ02,PoggSte04}.
In order to apply these methods, two main ingredients are needed. Namely, we require some regularity conditions along the reference extremal, and the coerciveness of the so-called \emph{second variation at the switching points}.  

We point out that these conditions are quite easy to check on a given extremal: indeed, the regularity assumptions are just sign 
conditions along the extremal,
while the second variation reduces to a quadratic function of one real variable. The assumptions on the zeros of the cost along the reference trajectory 
(Assumptions~\ref{NT}-\ref{SS}) are generic.

The cited papers \cite{ASZ02,PoggSte04} deal respectively with the Mayer problem and the minimum time problem, in a completely smooth framework. Therefore, the conditions found in  there cannot be directly applied to this context; in particular, the second variation
is rather different in the three cases. 

The Hamiltonian approach is used also in \cite{yacine}, where a similar problem is studied, and sufficient conditions for optimality are provided. In particular,  
the same regularity assumptions are required in the two results (we stress indeed the fact that the regularity conditions are very close to the necessary ones, 
in the sense that they sum up to requiring strict inequalities 
where the Pontryagin Maximum Principle yields mild ones). The condition that ensures the invertibility of the flow 
(in our context, the coerciveness of the second variations), in \cite{yacine} has the form of a non-degeneration condition on the Jacobian of the exponential mapping.

The two results thus present a set of alternative sufficient conditions, that can adapt to different cases. 

We finally mention the paper \cite{maurer-vossen}, where the authors investigate necessary and sufficient optimality conditions for a similar problem, with different techinques. 
The given optimal control problem is extended in such a way that the candidate optimal trajectory happens
to be a bang-bang extremal of the extended problem, so that sufficient conditions are stated in terms of optimality conditions of finite-dimensional optimization problems.

\medskip
This paper is organized as follows: in Section~\ref{sec: preliminaries} we state the problem and discuss the main Assumptions; in Section~\ref{sec:Ham} we describe the Hamiltonian methods; the main result is given in Section~\ref{sec: main}. 
Finally, in Section~\ref{sec:example} we provide an example.

\section{Statement of the Problem and Assumptions} \label{sec: preliminaries}

\subsection{Notations}
Let $M$ be a smooth $n$-dimensional manifold.
Given a vector field $f$ on $M$, the  Lie derivative at a point $q\in M$ of  a smooth function $\varphi \colon M \to \mathbb{R}$ with respect to $f$ is denoted with
$L_f \varphi(q)= \langle d\varphi(q),f(q)\rangle$. We  denote the iterated Lie derivative  $L_f\big(L_f\varphi\big)(q)$ as $L_f^2\varphi(q)$.

The Lie bracket of two vector fields $f,g$ is denoted as commonly with $[f,g]$.

Let $T^*M$ be the cotangent bundle of $M$ and $\pi\colon T^*M \to M$ its projection on $M$.
For every vector field $f$ on $M$, denote by the corresponding capital letter the associated Hamiltonian function, that is 
$F(\ell)=\langle \ell,f(\pi \ell) \rangle$, $\ell \in T^*M$.

Let $\boldsymbol{\sigma}$ be the canonical symplectic form on $T^*M$. 
With each Hamiltonian function $F$ we associate the Hamiltonian vector field $\vecc{F}$ on $T^*M$
defined by 
\[
\langle d F(\ell), \cdot \rangle = \boldsymbol{\sigma}(\cdot,\vecc{F}(\ell)).
\]
The Poisson bracket of two Hamiltonians $F,G$ is denoted $\{F,G\}$. We recall that $\{F,G\}=\boldsymbol{\sigma}(\vecc{F},\vecc{G})$.

The Hamiltonian flow associated with the Hamiltonian vector field $\vecc{F}$, and starting at time $t = 0$, is denoted with the cursive $\mathcal{F}_t$.

\subsection{Statement of the Problem}

We consider the following minimization  problem on $M$: 
\begin{subequations} \label{problema}
\begin{align}
& \text{minimize } \int_0^T \abs{u(t)\psi(\xi(t))} dt \text{ subject to} \label{eq: costo} \\
& \dot\xi(t) = f_0 (\xi(t)) + u(t) f_1(\xi(t)), \label{eq: contr sys} \\
& \xi(0)= \wbx_0, \quad \xi(t) = \wbx_f, \label{eq: vincoli} \\
& \abs{u(t)} \leq 1 \quad \qo t \in [0,T] \label{eq: controllo},
\end{align}
\end{subequations}
where $f_0$ and $f_1$ are smooth vector fields on $M$,  $\psi$ is a smooth real-valued function on $M$ and $T>0$  is fixed. This problem is indeed a generalization of the $L^1$ minimization problem considered, for instance, in \cite{maurer-vossen}.

The Pontryagin Maximum Principle (PMP) constitues a necessary condition for optimality.
In particular, it states that for every optimal trajectory $\xi(\cdot)$ of \eqref{problema}
there exist a constant $\nu\leq 0$ and a suitable curve $\lambda (t)\in T_{\xi(t)}^*M$ such that 
\begin{equation}\label{eq:maxcond}
u(t)F_1(\lambda(t)) +\nu |u(t) \psi(\xi(t))| =\max_{v\in[-1,1]} 
\left\{ vF_1(\lambda(t)) +\nu |v \psi(\xi(t))| \right\},  \qo t.
\end{equation}
In particular, in the normal case ($\nu=-1$), equation  \eqref{eq:maxcond} implies that the optimal control may have three possible behaviors:
\begin{itemize}
 \item If there exists an interval $I\!\subset\![0,T]$ such that $\abs{F_1(\lambda(t))} > \abs{\psi(\xi(t))}\quad \forall t \in I$, then  
 $u(t) = \mathrm{sgn}(F_1(\lambda(t))$. In this case we say that the interval $I$ is a {\it regular bang arc}, and we call $u$ a \emph{bang control}. 
\item If $\abs{F_1(\lambda(t)))} < \abs{\psi(\xi(t))} \quad \forall t \in I$, then the Hamiltonian \eqref{eq:maxcond} attains its maximum  
only if $u\equiv 0$ in $I$. 
To denote such arcs we use the descriptive name of {\it zero control arcs}.  
We recall, see \cite{PLOS}, that in the literature these arcs are also known as  {\it inactivated arcs}.
\item If $\abs{F_1(\lambda(t))} = \abs{\psi(\xi(t))}$ $\;\forall t \in I$, then the maximizing control is not uniquely determined by \eqref{eq:maxcond}. Indeed, if $F_1(\lambda(t)) = \abs{\psi(\xi(t))}$, then the 
 maximum in \eqref{eq:maxcond} is attained by every  $v\in[0,1]$. Analogously, if  $F_1(\lambda(t)) = - \abs{\psi(\xi(t))}$, then the maximum  is attained by every  $v \in [-1,0]$. We say that $I$ is a {\it singular arc}.
\end{itemize}
In this paper we state sufficient optimality conditions for solutions of \eqref{eq: contr sys}-\eqref{eq: vincoli}-\eqref{eq: controllo} 
given by a concatenation of bang and 
zero control arcs.
More precisely, we assume that we are given an admissible trajectory  $\wxi \colon [0,T]\to M$ satisfying 
\begin{align*}
& \dot{\wxi}(t)=
\begin{cases}
f_0(\wxi(t))+u_1 f_1(\wxi(t)) & t\in[0,\wtau_1)\\ 
f_0(\wxi(t)) & t\in(\wtau_1,\wtau_2)\\
f_0(\wxi(t))+u_3 f_1(\wxi(t)) & t\in(\wtau_2,T]\\
\end{cases}  \qquad
\wxi(0) = \wbx_0, \quad \wxi(T) = \wbx_f,
\end{align*}
for some $\wtau_1$, $\wtau_2 \in (0,T)$, $\wtau_1 < \wtau_2$ and some $u_1$, $u_3 \in \{-1, 1 \}$. The pair $(\wxi,\wu)$, where 
\begin{equation} \label{ref contr}
\wu(t) = \left\{
\begin{matrix}
u_1, \quad & t \in [0, \wtau_1[ , \\
0, \quad & t \in ]\wtau_1, \wtau_2[,  \\
u_3, \quad & t \in ]\wtau_2, T], \\
\end{matrix} \right.
\end{equation} 
is called the \emph{reference pair}; $\wxi$ and $\wu$ are the \emph{reference trajectory} and the \emph{reference control}, respectively.

In analogy to the classical bang-bang case, we say that $\wtau_1$ and $
\wtau_2$ are the {\em switching times} of the reference control $\wu$ and we call $\wxi(\wtau_1)$, $\wxi(\wtau_2)$ the {\em switching points} of $\wxi$.
For the sake of future notation we set $\wtau_0 := 0$, $\wtau_3 := T$ and $\wI_j := (\wtau_{j-1}, \wtau_{j} )$, $j = 1, 2, 3$.
Moreover, we call $h_j$ the vector field that defines the reference trajectory in the interval $\wI_j$, that is, 
\begin{equation}
 h_1 := f_0+u_1f_1, \qquad h_2 := f_0, \qquad  h_3 :=f_0+u_3f_1,
\end{equation}
and the
\emph{reference time-dependent vector field} $\wh h_t=f_0+\wu(t)f_1$, so that  $\wh h_t\equiv h_j$ for $t\in \wI_j$. Denote by  $\widehat{S}_t(x)$  the solution at time $t$ of the
Cauchy problem 
\begin{equation}
\label{eq:refdynam}
\dot{\xi}(t)=\wh h_t(\xi(t)), \qquad 
\xi(0)=x.
\end{equation}

In this paper, we provide some first and second  order sufficient conditions 
that guarantee the \emph{strong-local} optimality of $\wxi(\cdot)$, accordingly to the following definition:
\begin{definition}
A curve $\wxi \colon [0,T]\to M$, solution of  \eqref{eq: contr sys}-\eqref{eq: vincoli} with associate control $\wu(\cdot)$, is a strong-local minimizer 
of \eqref{problema} if  
there exists a neighborhood $\mathcal{U}$ in $[0,T] \times M$ of the graph of $\wxi$ such 
that $ \int_0^T |\wu(t)\psi(\wxi(t))| dt\leq  \int_0^T \abs{u(t)\psi(\xi(t))} dt$ for every admissible trajectory $\xi \colon [0,T]\to M$ of  
\eqref{eq: contr sys}-\eqref{eq: vincoli}-\eqref{eq: controllo}, with associated control $u(\cdot)$ and whose graph is in $\cU$.

If  $\int_0^T |\wu(t)\psi(\wxi(t))| dt < \int_0^T \abs{u(t)\psi(\xi(t))} dt$ for any admissible trajectory other than $\wxi$ and whose graph is in $\cU$, we say that $\wxi$ is a {\em strict} strong-local 
minimizer.
\end{definition}


\subsection{Assumptions}

Our first  assumption ensures that the cost function is smooth along the reference trajectory but for a finite number of points. It also 
guarantees that the Hamiltonian vector field associated with the maximized Hamiltonian (defined in \eqref{eq:Hmax} below)
is well defined and $C^1$ but in a codimension $1$ subset,
 as explained in Remark~\ref{connected zero} below. 
It is also crucial in defining the second variation of an appropriate subproblem of \eqref{problema}.
\begin{hypo}\label{NT} 
For every $t\in[0,T]$, the vector field $\wh h_t$ is never tangent to the set $\{\psi=0\}$ along $\wxi(t)$, that is
 $L_{\wh h_t}\psi(\wxi(t))\neq 0$. 
 \end{hypo}
Thanks to 
this assumption,
$\psi\circ\wxi$ vanishes at most a finite number of times.  In particular, we assume that $\psi(\wxi(t))$ 
vanishes $n_1$ times in the interval $\wI_1$ and $n_3$ times in the interval $\wI_3$ (where we admit the cases $n_1 =0$ and $n_3 = 0$). 
We denote as
$0 < \sss_{11} < \sss_{12}< \ldots < \sss_{1 n_1 -1 } < \sss_{1 n_1} < \wtau_1 $,
the zeros of $\psi\circ\wxi$ occurring in $\wI_1$ and  
$\wtau_2 < \sss_{31} < \sss_{32}< \ldots < \sss_{3 n_3 -1 } < \sss_{3 n_3} <T $,
the zeros of $\psi\circ\wxi$ occurring in $\wI_3$.
Set $a_0=\lim_{t \to 0^+}\mathrm{sgn}(\psi(\wxi(t)))$ and $a_2=\mathrm{sgn}(\psi(\wxi(\wtau_2)))$. In particular,
\begin{alignat*}{2}
\mathrm{sgn}\big(\psi\circ\wxi|_{(\sss_{1\,i-1},\sss_{1i})}\big)&=a_0 (-1)^{i-1}, \quad && i\in\{1,\ldots,n_1+1\},\\
\mathrm{sgn}\big(\psi\circ\wxi|_{(\sss_{3\,i-1},\sss_{3i})}\big)&=a_2 (-1)^{i-1}, \quad && i\in\{1,\ldots,n_3+1\},
\end{alignat*}
(here we set $\sss_{10}=0$, $\sss_{1\,n_1+1}=\wtau_1$, $\sss_{30}=\wtau_2$, $\sss_{3\,n_3+1}=T$).
\begin{hypo} \label{SS}
The cost $\psi$ does not vanish at the switching points of the reference trajectory, that is $\psi(\wxi(\wtau_i))\neq 0,\ i=1,2$. 
\end{hypo} 
\begin{remark}\label{connected zero}
By continuity and Assumption~\ref{NT},  there exist two neighborhoods $\mathcal{V}_1,\mathcal{V}_3$ in $M$ of the arcs $\left. \wxi\right\vert_{\wI_1},\ \left. \wxi\right\vert_{\wI_3}$
respectively, such that the zero level sets of $\left.\psi \right\vert_{\mathcal{V}_1}$ and $\left.\psi \right\vert_{\mathcal{V}_3}$ are codimension-$1$ submanifolds  of $M$, transverse to $h_1$ and $h_3$, respectively.
Moreover the zero level set of $\left.\psi \right\vert_{\mathcal{V}_j}$, $j=1,3$,  has $n_j$ connected components.	
\end{remark}
Our next assumption is that the reference pair satisfies the Pontryagin Maximum Principle (PMP from now on) in its normal form; 
in principle, non-smooth versions of PMP
(see e.g. \cite[Theorem 22.26]{Clarke}) are required. Indeed our problem can be seen as a hybrid control problem, as defined in 
\cite[Section 22.5]{Clarke}, with the switching surface $S$ given by $\left\{(t, x, y) \colon \psi(x) =0, \ y = x \right\}$. 

Nevertheless, thanks to Assumption~\ref{NT}, in the case under study \cite[Theorem 22.26]{Clarke} reduces to the standard smooth version of PMP (as stated, for instance, in \cite{AgSac}).

In order to apply PMP,  
consider $\Psi\colon \ell \in T^*M\mapsto \psi\circ\pi(\ell)) \in \mathbb{R}$. As we shall extensively use it, we recall that  $\vPsi(\ell) = (-D\psi(\pi\ell), 0).$ Define the (normal) \emph{control-dependent Hamiltonian}, 
\begin{equation}
\mathfrak{h}(u,\ell) := F_0(\ell) +u F_1(\ell) - |u \Psi(\ell)|,   \qquad (u,\ell)\in[-1,1]\times T^*M \label{eq:ctr dep Ham} 
\end{equation}
and, for $\ell \in T^*M$, the \emph{reference Hamiltonian}  
 and the \emph{maximized Hamiltonian}  respectively as 
\begin{align}
\widehat{H}_t (\ell)&:=F_0 +\wu(t) F_1(\ell) - |\wu(t) \Psi(\ell)|,  \label{eq:ref Ham}\\
H^{\max}(\ell) \! &:= \!\!\!\max_{u\in[-1,1]}\!\!\mathfrak{h}(u,\ell) \!=\! 
\begin{cases}
F_0(\ell) + F_1(\ell) - \abs{\Psi(\ell)},  & \text{if }  F_1(\ell) > \abs{\Psi(\ell)} , \\
F_0(\ell ), & \text{if } \abs{F_1(\ell)} \leq \abs{\Psi(\ell)} , \\
F_0(\ell) - F_1(\ell) - \abs{\Psi(\ell)},  & \text{if } F_1(\ell) < -  \abs{\Psi(\ell)} , 
\end{cases} \label{eq:Hmax}
\end{align}
%
%
and we assume the following.
\begin{hypo}[PMP] \label{PMP} There exists a Lipschitzian curve $\wla\colon [0,T] \to T^*M$  such that
\begin{alignat}{2}
  \dot{\wla}(t)&=\vecc{\widehat{H}_t} (\wla(t)) \quad &\text{a.e. } t &\in [0,T],\nonumber\\
 \pi \wla(t)&= \wxi(t), &  \forall t &\in [0,T] . \nonumber \\
 \widehat{H}_t (\wla(t)) &= 
H^{\max}(\wla(t)), \qquad &  \qo t &\in [0,T] . \label{PMP max}
\end{alignat}
\end{hypo}
The curve $\wla(\cdot)$ is called the \emph{reference extremal}. In the following, we set 
\[
\well_0=\wla(0), \quad \well_1 = \wla(\wtau_1), \quad \well_2 = \wla(\wtau_2),
\quad \well_T = \wla(T) .
\]
Let $\xi(\cdot)$ be an admissible trajectory of \eqref{eq: contr sys} that satisfies the Pontryagin Maximum Principle  
with associated extremal $\lambda(\cdot)$.
If the costate
 $\lambda(t)$ does not belong to the set
\begin{equation} \label{eq:switchsurf}
\left\{
\ell \in T^*M \colon \abs{F_1(\ell)} = \abs{\Psi(\ell)}
\right\},
\end{equation}
then the control associated with $\xi$ can be recovered uniquely by equation \eqref{PMP max}.
Analogously to the smooth case, we call such a set the {\em switching surface}  of problem \eqref{problema}.
\begin{remark} 
Assumption \ref{SS} ensures that in a neighborhood of the range of $\wla(\cdot)$ the switching surface is composed by two non-intersecting connected components, 
$\left\{ F_1 = \abs{\Psi} \right\}$ and $\left\{ F_1 = - \abs{\Psi} \right\}$. 
Together with Assumption~\ref{NT}, it guarantees that the Hamiltonian vector field associated with $\widehat{H}_t$ is well defined along the reference trajectory,
except at the times $t=\sss_{1 i}$, $i=1, \ldots, n_1$, $\sss_{3i}$, $i=1, \ldots, n_3$, $\wh\tau_1, \wh\tau_2$, where it has possibly different left-sided and right-sided limits. 
Assumption \ref{NT} can actually be weakened: the non-tangency condition is required only along the two bang arcs. 
\end{remark}

It is easy to see that the reference Hamiltonian~\eqref{eq:ref Ham}, as a function on the cotangent bundle, is  piecewise time-independent, and it takes the values 
\begin{equation} \label{H1H2H3}
\widehat{H}_t (\ell):=
\begin{cases}
H_1^{\sigma_{0i}}:=F_0+u_1F_1+\sigma_{0i}\psi\circ\pi, & t\in[\sss_{1\,i-1},\sss_{1i}],   \\[-1mm]
 & \quad\qquad i=1,\ldots,n_1+1,\\
H_2:=F_0,  &  t\in[\wtau_1,\wtau_2],\\
H_3^{\sigma_{2i}}:=F_0+u_3F_1+\sigma_{2i}\psi\circ\pi,  & t\in[\sss_{3\,i-1},\sss_{3i}], \\[-1mm]
& \quad\qquad i=1,\ldots,n_3+1,
\end{cases}
\end{equation}
where we used the following symbols:
\[
\sigma_{0i}=a_0(-1)^i,\ i=1,\ldots,n_1,\qquad
\sigma_{2i}=a_2(-1)^i,\ i=1,\ldots,n_3. 
\]
Because of the maximality condition of PMP (equation \eqref{PMP max}), the following inequalities hold along the reference extremal:
\begin{gather} \label{max debole 1}
u_1 F_1(\wla(t))\geq \big|\psi(\wxi(t))\big|,\ t\in \wI_1, \quad\quad
\big|F_1(\wla(t))\big|\leq \big|\psi(\wxi(t))\big|,\ t\in \wI_2,  \\[2mm]
u_3 F_1(\wla(t))\geq \big|\psi(\wxi(t))\big|,\ t\in \wI_3. \label{max debole 2}
\end{gather}
%
In addition,  the following relations must hold:
\begin{equation}
\label{eq:debole}
\frac{d}{dt}\big(H_2-H_1^{\sigma_{0 n_1}}\big)(\wla(t))|_{t=\wh\tau_1}\geq 0, \qquad 
\frac{d}{dt}\big(H_3^{\sigma_{20}}-H_2\big)(\wla(t))|_{t=\wh\tau_2}\geq 0.
\end{equation}
Even though the derivative $\wla(t)$ is discontinuous at $t= \wtau_i$, $i=1, 2$, the two derivatives in \eqref{eq:debole} exist and are well defined. In particular, they can be expressed as follows:
\begin{align}
\frac{d}{dt}\left.\!\big(H_2-H_1^{\sigma_{0 n_1}}\big)(\wla(t))\right\vert_{t=\wh\tau_1}
\!\! \!\! &=\Big\{H_1^{\sigma_{0 n_1}},H_2\Big\}(\well_1)  \! = \! \dueforma{\vH1^{\sigma_{0n_1}}}{\vH2}(\well_1), \label{eq:RS1eq} \\
\frac{d}{dt}\left.\!\big(H_3^{\sigma_{20}}-H_2\big)(\wla(t))\right\vert_{t=\wh\tau_2}
\!\! \!\! & = \Big\{H_2,H_3^{\sigma_{2 0}}\Big\}(\well_2) \! = \! \dueforma{\vH2}{\vH3^{\sigma_{20}}}(\well_2) . \label{eq:RS2eq}
\end{align}
\begin{remark}Due to the nonlinearity introduced by the absolute value of the control in the integral cost, two consecutive bang arcs can exist 
only if $\psi$ vanishes at the switching point between such arcs (this is a straightforward application of the PMP and of previous remarks). 
In particular, we relate this fact to \cite[Theorem 3.1]{maurer-vossen}, that states the impossibility of the existence of bang-bang junctions: indeed,
the results of \cite{maurer-vossen} hold for the case
$\psi\equiv 1$ (plus some other control-independent terms in the cost).
\end{remark}
As anticipated in the Introduction, in order 
to apply the Hamiltonian methods, we need the  maximized Hamiltonian 
$H^{\max}$ to be well defined and sufficiently 
regular in a neighborhood of the range 
of the reference extremal. 
More precisely, we need to ensure that the analytical expression of the maximized Hamiltonian is the same on a sufficiently small
neighborhood of 
$\wla(t)$, for any $t \neq \wtau_1, \wtau_2$. 
This is guaranteed by the strengthened version of the necessary conditions \eqref{max debole 1}-\eqref{max debole 2}
and \eqref{eq:debole}.
\begin{hypo}\label{RA} 
Along each arc the reference control is the only one that maximizes over $u \in [-1, 1]$ the control-dependent Hamiltonian evaluated along 
$\wla$, $\mathfrak{h}(u,\wla(t))$. Equivalently, in equations~\eqref{max debole 1}-\eqref{max debole 2} the strict inequalities hold true
for any $t \neq \wtau_1, \wtau_2$.
\end{hypo}
The following assumption is the  strict form of the necessary conditions \eqref{eq:RS1eq}-\eqref{eq:RS2eq}. 
It basically states that the change of value in the control $\wh u$ can be detected 
by the first order approximation of $\left( H_2-H_1^{\sigma_{0n_1}} \right)\circ\wla$ at $\wtau_1$ and $\left( H_3^{\sigma_{20}}-H_2 \right) \circ\wla$ at $\wtau_2$, 
respectively. It ensures the transversality of the intersection between the reference extremal $\wla$ and the switching surface \eqref{eq:switchsurf}, 
thus avoiding bang-singular junctions.
\begin{hypo}\label{RS} 
\[\dfrac{d}{dt}\big(H_2-H_1^{\sigma_{0n_1}}\big)(\wla(t))|_{t=\wh\tau_1}>0 , \qquad 
  \dfrac{d}{dt}\big(H_3^{\sigma_{20}}-H_2\big)(\wla(t))|_{t=\wh\tau_2}>0.\]
\end{hypo}
\subsection{The Second Variation} 
This section is devoted to the construction of the second variation associated with problem \eqref{problema}, in the spirit of \cite{ASZ02,PoggSte04,ASZ98,PoggTo}.

In order to compute the second variation of the cost functional, one should take into account all the admissible variations of the reference trajectory 
$\wxi$.
Actually, as  it will be proved,  it suffices 
to consider only the trajectories corresponding to controls having the same bang-zero-bang structure of the reference one, that is,
to allow only variations of the switching times $\wtau_1$ and $\wtau_2$. 
We stress that Assumption~\ref{RA} is crucial to justify this reduction.
We thus obtain the following two dimensional sub-problem of \eqref{problema}:
 \begin{equation} \label{ext cost}
 \min_{0<\tau_1<\tau_2<T} \int_{I_1 \cup I_3} \left\vert \psi(\xi(t))\right\vert \;dt
 \end{equation}
 subject to 
 \begin{equation}\label{eq: sub ctr sys}\left\{
\begin{array}{l}
\dot{\xi} = h_i \circ \xi(t) \quad t \in I_i, \quad i=1, 2, 3, \\
\xi(0)= \wbx_0, \quad \xi(T) = \wbx_f  , \\
I_1=(0,\tau_1), \ I_2=(\tau_1,\tau_2),\   I_3=(\tau_2,T), 
\end{array} \right. 
\end{equation}
It is well known that, on a smooth manifold, the second derivatives
are highly coordinate-dependent, unless they are computed with respect to variations contained in the kernel of the differential of the function to be derived. To overcome 
this problem and obtain an intrinsic expression of the second variation of the cost \eqref{ext cost}, we introduce two smooth functions $\alpha,\beta \colon M \to \mathbb{R}$ satisfying
\begin{equation} \label{eq: alpha beta}
d\alpha(\wbx_0)=\well_0, \qquad d\beta(\wbx_f)=-\well_T,
\end{equation}
and remove the constraint on the initial point of the trajectories, thus obtaining the extended sub-problem:
\begin{equation} \label{eq: costext}
\min_{\tau_1,\tau_2}  \Big( \alpha(\xi(0)) + \beta(\xi(T)) + \int_{I_1 \cup I_3} \left\vert \psi(\xi(t))\right\vert \;dt \Big)
\end{equation}
\noindent
subject to
\begin{equation} \label{eq:contrsysext}
\left\{
\begin{array}{l}
\dot{\xi} = h_i \circ \xi(t) \quad t \in I_i, \quad i=1, 2, 3, \\
\xi(0) \in M, \quad \xi(T) = \wbx_f  \\
I_1=(0,\tau_1), \ I_2=(\tau_1,\tau_2),\   I_3=(\tau_2,T).
\end{array} \right. 
\end{equation}
It is easy to see, by PMP, that the differential of the cost \eqref{eq: costext} is 
zero at $\wbx_0$. 

The necessity of summing variations belonging to tangents spaces based at different points of the reference trajectory is a typical issue in geometric control.  
We get around this problem by pulling-back the solutions of 
\eqref{eq:contrsysext}
to the initial point $\wbx_0$, by means of the flow of the reference vector field; in this way, the variations of the trajectory will 
evolve on the tangent space $T_{\wbx_0}M$.

To simplify the expressions of the pull-backs, we first reparametrize time in such a way that each interval $I_j$ is mapped into the corresponding reference interval $\wI_j$;  
to do that, we
consider the {\em variations of the lengths of the intervals}
\[
\theta_i := \left(\tau_i - \tau_{i-1} 
\right) - \left(\wh\tau_i - \wh\tau_{i-1} 
\right), 
\]
we set $\bk:= \begin{pmatrix}
\theta_1,\theta_2,\theta_3
 \end{pmatrix}$,
and define the piecewise-affine reparametrization $\varphi_{\bk} \colon [0,T] \to [0, T]$ by means of the Cauchy problem
\[
\varphi_{\bk}(0) = 0, \quad
\dot\varphi_{\bk}(t) = \dfrac{\tau_i - \tau_{i-1}}{\wh\tau_i - \wh\tau_{i-1}} 
\quad \forall t \in [\wtau_{i-1}, \wtau_{i}), \quad i=1, 2, 3.
\]
Denoting the \emph{pull-back fields} as 
\begin{equation} \label{pullback fields}
g_i := \wSinv{t\, *}\, h_i \circ \wS{t} , \quad t \in [\wh\tau_{i-1}, \wh\tau_i], \quad i=1, 2, 3,
\end{equation}
the {\em pullback system} of \eqref{eq:contrsysext} has the form
\[
\begin{cases}
 \dot\zeta_t(x, \bk) =
\frac{\theta_i}{\wtau_i-\wtau_{i-1}}
 g_i(\zeta_t(x, \bk))
\quad t \in [\wh\tau_{i-1}, \wh\tau_i), \quad i=1, 2, 3, \\
\zeta_0(x,\bk)=x ,
\end{cases}
\]
where we notice that $\dot{\varphi}_{\bk}(t)=1+\frac{\theta_i}{\wtau_i-\wtau_{i-1}}
$ for every $t\in \wI_i$.
Finally, setting
\begin{equation} \label{wbeta e wpsi}
\wh\beta := \beta\circ \wS{T}, \quad 
\wpsi{t} := \psi\circ \wS{t}, 
\end{equation}
the cost can be written as
\begin{equation}
J (x, \bk) = \alpha(x) + \wh\beta(\zeta_T(x, \bk)) + 
 \int_{\wI_1 \cup \wI_3} \dot{\varphi}_{\bk}(t)  \big|\wpsi{t}(\zeta_t(x, 
 \bk))\big| \;dt. 
\end{equation}
Since we need to compute the first and second variations of $J$, it is necessary to get rid of the absolute value inside the integral, that is, to locate the zeros of the function $\wpsi{t}(\zeta_t(x,\bk))$ for $t\in \wI_1\cup \wI_3$. They turn out to be smooth functions of the initial state $x$ and of the switching time variations $\bk$, as the following Lemma states.
\begin{lemma}
Locally around $(\wbx_0,\boldsymbol{0})$,  there exist $n_1$ smooth functions $\sg_{1i}$ of $(x,\bk)$ such that $\sg_{1i}(\wbx_0,\boldsymbol{0}) = \sss_{1i}$ and
\begin{equation} \label{sg1 implicit}
\wpsi{\sg_{1i}(x,\bk)}\circ\exp\Big(\sg_{1i}(x,\bk)\frac{\theta_1}{\wtau_1}g_1\Big)(x)=0, \qquad i=1,\ldots,n_1.
\end{equation}
Analogously,  for any $ i=1,\ldots,n_3$, there exists a smooth function $\sg_{3i}$ of $(x,\bk)$ such that
$\sg_{3i}(\wbx_0,\boldsymbol{0}) = \sss_{3i}$ and, 
\begin{equation} \label{sg2 implicit}
\wpsi{\sg_{3i}(x,\bk)}\circ\exp\Big((\sg_{3i}(x,\bk)-\wtau_2)\frac{\theta_3}{T-\wtau_2}g_3\Big)
\circ \exp\big(\theta_2 g_2\big)
\circ \exp\big(\theta_1 g_1\big)
(x)=0.
\end{equation}
Moreover, the following relations hold at the point $(\wbx_0,\boldsymbol{0})$:
\begin{gather*}
\frac{\partial \sg_{1i}}{\partial x}=-\,\frac{L_{\delta x} \wpsi{\sss_{1i}}(\wbx_0) }{L_{g_1} \wpsi{\sss_{1i}}(\wbx_0)} 
, \qquad 
\frac{\partial \sg_{1i}}{\partial \theta_1}=-\,\frac{\sss_{1i}}{\wtau_1}
, \qquad  
\frac{\partial \sg_{1i}}{\partial \theta_2}= \frac{\partial \sg_{1i}}{\partial \theta_3}= 0 ,   \\
\frac{\partial \sg_{3i}}{\partial x}=-\,\frac{L_{\delta x} \wpsi{\sss_{3i}}(\wbx_0) }{L_{g_3} \wpsi{\sss_{3i}}(\wbx_0)}
, \quad 
\frac{\partial \sg_{3i}}{\partial \theta_1}=-\,
\frac{L_{g_1} \wpsi{\sss_{3i}}(\wbx_0) }{L_{g_3} \wpsi{\sss_{3i}}(\wbx_0)}
, \\ 
\frac{\partial \sg_{3i}}{\partial \theta_2}=-\,
\frac{L_{g_2} \wpsi{\sss_{3i}}(\wbx_0) }{L_{g_3} \wpsi{\sss_{3i}}(\wbx_0)}
, \quad 
\frac{\partial \sg_{3i}}{\partial \theta_3}=-\frac{\sss_{3i}-\wtau_2}{T-\wtau_2} .
\end{gather*}
\end{lemma}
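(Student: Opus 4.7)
The plan is to apply the implicit function theorem to two suitable auxiliary functions $\Phi_{1i}$ and $\Phi_{3i}$ defined by the left-hand sides of \eqref{sg1 implicit} and \eqref{sg2 implicit}, and then read off the partials by implicit differentiation. Both steps hinge on Assumption~\ref{NT}, which guarantees a nonzero $s$-derivative, and on the observation that at $\bk = \boldsymbol 0$ the exponential maps appearing in the defining relations collapse to the identity, so only the innermost infinitesimal data survive at first order.

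For the first family, define
\[
\Phi_{1i}(s, x, \bk) := \wpsi{s}\circ\exp\!\Big(s\tfrac{\theta_1}{\wtau_1}\,g_1\Big)(x),
\]
so that \eqref{sg1 implicit} reads $\Phi_{1i}(\sg_{1i}(x,\bk), x, \bk) = 0$. Since $\sss_{1i}$ is a zero of $\psi\circ\wxi$ and the exponential is the identity at $\bk = \boldsymbol 0$, one has $\Phi_{1i}(\sss_{1i}, \wbx_0, \boldsymbol 0) = \psi(\wxi(\sss_{1i})) = 0$. The $\theta_1$-factor in the exponent kills the exponential's contribution to $\partial_s\Phi_{1i}$ at $\bk = \boldsymbol 0$, hence
\[
\partial_s \Phi_{1i}(\sss_{1i}, \wbx_0, \boldsymbol 0) = \tfrac{d}{ds}\wpsi{s}(\wbx_0)\big|_{s=\sss_{1i}} = L_{h_1}\psi(\wxi(\sss_{1i})) = L_{g_1}\wpsi{\sss_{1i}}(\wbx_0) \neq 0
\]
by Assumption~\ref{NT}, and the implicit function theorem produces the smooth $\sg_{1i}$. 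Implicit differentiation $\partial_\bullet \sg_{1i} = -\partial_\bullet \Phi_{1i}/\partial_s \Phi_{1i}$, evaluated at $\bk = \boldsymbol 0$ where the exponential is the identity, then reduces $\partial_x \Phi_{1i}$ to $d\wpsi{\sss_{1i}}(\wbx_0)$ and $\partial_{\theta_1}\Phi_{1i}$ to $(\sss_{1i}/\wtau_1)\,L_{g_1}\wpsi{\sss_{1i}}(\wbx_0)$, giving the stated formulas; the partials in $\theta_2, \theta_3$ vanish because $\Phi_{1i}$ does not depend on these variables.

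The argument for $\sg_{3i}$ is fully analogous, applied to
\[
\Phi_{3i}(s, x, \bk) := \wpsi{s}\circ\exp\!\Big((s-\wtau_2)\tfrac{\theta_3}{T-\wtau_2}g_3\Big)\circ\exp(\theta_2 g_2)\circ\exp(\theta_1 g_1)(x).
\]
The three exponentials again collapse at $\bk = \boldsymbol 0$, so $\Phi_{3i}(\sss_{3i}, \wbx_0, \boldsymbol 0) = 0$ and $\partial_s \Phi_{3i}(\sss_{3i}, \wbx_0, \boldsymbol 0) = L_{g_3}\wpsi{\sss_{3i}}(\wbx_0) \neq 0$ by Assumption~\ref{NT}. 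Implicit differentiation yields the four partials: the $x$-derivative contributes $d\wpsi{\sss_{3i}}(\wbx_0)$; the derivatives in $\theta_1$ and $\theta_2$ pick up, respectively, the generators $g_1$ and $g_2$ applied to $\wpsi{\sss_{3i}}$ at $\wbx_0$, since at $\bk = \boldsymbol 0$ the outer exponentials are the identity and only the innermost $\partial_{\theta_j}\exp(\theta_j g_j)|_0 = g_j$ survives; and the $\theta_3$-derivative produces the factor $(\sss_{3i}-\wtau_2)/(T-\wtau_2)$ exactly as in the first case. The only mildly technical point is that each $g_i$ is really time-dependent, so the symbols $\exp(\theta_j g_j)$ must be read as first-order data of the rescaled pullback flow at the appropriate time slice; but since we only need first-order information at $\bk = \boldsymbol 0$, where the flow is stationary at $\wbx_0$, all cross terms drop out and the expressions reduce to Lie derivatives of $\wpsi{\sss_{ki}}$ at $\wbx_0$ as listed.
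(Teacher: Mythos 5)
Your proof is correct and follows exactly the route the paper indicates: apply the implicit function theorem to the functions defined by \eqref{sg1 implicit} and \eqref{sg2 implicit}, with Assumption~\ref{NT} supplying the nonvanishing $s$-derivative (via $L_{g_i}\wpsi{\sss_{ki}}(\wbx_0)=L_{h_i}\psi(\wxi(\sss_{ki}))$), and then read off the partials by implicit differentiation at $\bk=\boldsymbol 0$, where the exponentials collapse to the identity; all the stated formulas check out. One minor remark: your closing caveat is unnecessary, since each pull-back field $g_i=\wSinv{t\,*}h_i\circ\wS{t}$ is in fact time-independent on its interval ($h_i$ being invariant under its own flow), so $\exp(\theta_j g_j)$ is literally the flow of an autonomous field and no ``first-order reading'' is needed --- but this does not affect the argument.
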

The Lemma is proved by  applying the implicit function theorem to the functions defined in \eqref{sg1 implicit} and \eqref{sg2 implicit}; 
this can be done thanks to Assumption~\ref{NT}.
As usual, we set $\sg_{10}=0,\ \sg_{1\, n_1+1}=\wtau_1, \ \sg_{30}=\wtau_2,\ \sg_{3\, n_3+1}=T$.

\smallskip

Taking advantage of the functions $\sg_{1i}, \ \sg_{3i}$, we can write the cost $J(x,\bk)$ as 
\begin{align*}
 J & (x, \bk) = \alpha(x) + \wh\beta(\zeta_T(x, \bk)) \\
&+ 
a_0  \Big(1+\frac{\theta_1}{\wtau_1}\Big) \sum_{i=1}^{n_1+1}(-1)^{i-1}
\!\!\! \!\!\! \!\!\!  \int\limits_{\sg_{1\, i-1}(x,\bk)}^{\sg_{1 i}(x,\bk)}  \!\!\! \!\!\!       \wpsi{t}\circ\exp\Big(
\frac{t\theta_1}{\wtau_1}g_1 
 \Big)(x)
  \;dt\\
  &+ 
a_2  \Big(1+\frac{\theta_3}{T-\wtau_2}\Big) \times \\
& \times \sum_{i=1}^{n_3+1}(-1)^{i-1} 
 \!\!\! \!\!\! \!\!\! \int\limits_{\sg_{3\, i-1}(x,\bk)}^{\sg_{3 i}(x,\bk)}  \!\!\!  \!\!\! 
 \wpsi{t}\circ
 \exp\Big(\frac{(t-\wtau_2)\theta_3}{T-\wtau_2}g_3\Big)
\circ \exp\big(\theta_2 g_2\big)
\circ \exp\big(\theta_1 g_1\big) (x)   dt,
\end{align*}
and compute its second variation 
at the point $(\wbx_0,\boldsymbol{0})$ along all variations $(\delta x,\beps)\in T_{\wbx_0}M\times \mathbb{R}^3$ 
compatible with the constraints \eqref{eq:contrsysext}, %
that is, along the variations in the space
\begin{equation} \label{admissible var}
V=\{(\delta x,\beps) \colon \delta x+\sum_{i=1}^3\ep_ig_i=0,\ \sum_{i=1}^3\ep_i=0 \}.
\end{equation}
After some easy but tedious manipulations, we end up with the following expression:
\begin{equation}
\label{eq:secondvar}
\begin{split}
& J\se[\dx, \beps]^2 =  \dfrac{1}{2}D^2\wh\gamma(\wbx_0)[\dx]^2 + 
 \liederder{\dx}{\sum\limits_{i=1}^3\ep_i g_i}{\wh\beta} 
 +  \liederdo{\sum\limits_{i=1}^{3}\ep_i g_i }{\wh\beta} 
 \\
& + a_2 \sum_{i=1}^{n_3} (-1)^i \dfrac{\lieder{\dx}{\wh\psi_{\sss_{3i}}}}{\lieder{g_3}{\wpsi{\sss_{3i}}}}\left(
2\lieder{\dx + \ep_1 g_1 + \ep_2 g_2}{\wpsi{\sss_{3i}}} - \lieder{\dx}{\wpsi{\sss_{3i}}}
\right) \\ 
& +a_0\sum_{i=1}^{n_1} (-1)^i \dfrac{\left( \lieder{\dx}{\wpsi{\sss_{1i}}} \right)^2}{\lieder{g_1}{\wpsi{\sss_{1i}}}} + \int_{\wI_3} \liederder{\dx}{\ep_1 g_1 + \ep_2 g_2}{\left\vert \wpsi{t} \right\vert} \, dt 
\\ %
& +a_0 \dfrac{\ep_1}{2} (-1)^{n_1}\left(
\lieder{\dx}{\wh\psi_{\wh\tau_1}}
+ \lieder{\dx + \ep_1 g_1}{\wh\psi_{\wh\tau_1}}
\right) \\ 
& +a_2(-1)^{n_3} \ep_3 \lieder{\dx + \sum\limits_{i=1}^3\ep_i g_i}{\wh\psi_T} 
- \dfrac{\ep_3^2}{2}\lieder{g_3}{\wh\psi_T}  \\ 
& + a_2 \sum_{i=1}^{n_3} (-1)^i \dfrac{\left( \lieder{\ep_1 g_1 + \ep_2 g_2}{\wh\psi_{\sss_{3i}}} \right)^2}{\lieder{g_3}{\wh\psi_{\sss_{3i}}}}  + \dfrac{1}{2}\int_{\wI_3} \liederdo{\ep_1 g_1 + \ep_2 g_2}{\left\vert \wh\psi_{\wh t} \right\vert} \, dt \\ 
& +
 \dfrac{\ep_1 \ep_2}{2} \int_{\wI_3} \lieder{\liebr{g_1}{g_2}}{\left\vert \wh\psi_{t}\right\vert} \, dt 
  + \dfrac{1}{2}\sum_{1 \leq i< j \leq 3}\ep_i \ep_j \lieder{\liebr{g_i}{g_j}}{\wh\beta},
\end{split}
\end{equation}
where $\wh\gamma(x) := \alpha(x) + \wh\beta(x) + \int_{\wI_1 \cup \wI_3} |\wpsi{s}(x)| ds$. 
\begin{remark}
The terms containing the Lie derivative of $|\wpsi{t}|$ are defined everywhere, except for a finite number of values of $t$, therefore the integrals are well defined. 
\end{remark}
We recall that the space of admissible variations for the original subproblem \eqref{ext cost}-\eqref{eq: sub ctr sys} is
\[
V_0=\{(\delta x,\beps) \colon \delta x+\sum_{i=1}^3\ep_ig_i=0,\ \sum_{i=1}^3\ep_i=0, \ \delta x  = 0 \}.
\]

Indeed, 
only the restriction to $V_0$ of \eqref{eq:secondvar} is independent of the choice of $\alpha$ and $\beta$,
while its value on the whole $V$ depends on the choice of these two functions. 
Moreover we notice that the second variation $J\se$ can be written as a sum: $J\se=\dfrac{1}{2}D^2\wh\gamma(\wbx_0)[\dx]^2+J\se_0$, where $J\se_0$ does not depend on $\alpha$. 
Since $D^2\wh\gamma(\wbx_0)[\dx]^2|_{V_0}$ is null, 
then, if $J\se|_{V_0}$ is coercive, we can always choose $\alpha$ in such a way that 
$J\se$ is coercive on the whole $V$.
In Section \ref{sec: main} we will prove that the coerciveness of $J\se$ on $V$ implies the 
existence of a suitable Lagrangian submanifold
such that 
the projection of the maximized Hamiltonian flow is an invertible map between this manifold and its image on $M$  which is one of the 
key points for applying Hamiltonian methods. In view of this, our last assumption is the following:
 \begin{hypo}\label{hp:secvar}
 $J\se[0, \beps]^2$ is coercive on $V_0$. 
 \end{hypo}
 \begin{remark}
We stress  that,  in order to apply the Hamiltonian methods, we need to know the expression of the second variation, equation \eqref{eq:secondvar}, of the extended sub-problem.
Nevertheless, the invertibility is guaranteed under Assumption~\ref{hp:secvar}, i.e.~it suffices  to check the coerciveness of the quadratic form 
\begin{equation}
\label{eq:secondvar red}
\begin{split}
 J\se[0, \beps]^2 &=  
 a_0 \dfrac{\ep_1}{2} (-1)^{n_1} \lieder{ \ep_1 g_1}{\wh\psi_{\wh\tau_1}}
 - \dfrac{\ep_3^2}{2}\lieder{g_3}{\wh\psi_T}   \\%
& + a_2\,  \ep_3^2 \sum_{i=1}^{n_3} (-1)^i  \lieder{g_3}{\wh\psi_{\sss_{3i}}} 
+ \dfrac{1}{2}\int_{\wI_3} \liederdo{\ep_1 g_1 + \ep_2 g_2}{\left\vert \wh\psi_{\wh t} \right\vert} \, dt \\ 
& +  \dfrac{\ep_1 \ep_2}{2} \int_{\wI_3} \lieder{\liebr{g_1}{g_2}}{\left\vert \wh\psi_{t}\right\vert} \, dt   + \dfrac{1}{2}\sum_{1 \leq i< j \leq 3}\ep_i \ep_j \lieder{\liebr{g_i}{g_j}}{\wh\beta}.
\end{split}
\end{equation}
on the triples $\beps \in \R^3$ satisfying 
\[
\ep_1+\ep_2+\ep_3=0\qquad
\ep_1g_1(\wbx_0)+\ep_2g_2(\wbx_0)+\ep_3g_3(\wbx_0)=0.\]
Thus, if  the two vectors $(g_3-g_2)(\wbx_0)$ and
$(g_2-g_1)(\wbx_0)$ are linearly independent, then the space $V_0$ is trivial, so that the second variation is coercive by definition. Otherwise, $V_0$ is a one-dimensional linear space.
\end{remark}
\section{Hamiltonian Formulation} \label{sec:Ham}
\subsection{Construction of the Maximized Hamiltonian Flow} \label{sec: cstr max Ham}
The maximized Hamiltonian \eqref{eq:Hmax} is well defined, continuous and piecewise smooth in $T^*M$. 
By contrast, its associated Hamiltonian vector field is not well defined on the switching surfaces and on the zero level set of $\Psi$. The scope of this section is to prove that, nevertheless, the flow associated with the maximized Hamiltonian is well defined, at least in a tubular neighborhood of the graph of the reference extremal.
This property relies on the fact that the reference extremal crosses transversally the hypersurfaces of discontinuity of the Hamiltonian vector field.

Fix $\epsilon>0$;
then on a suitable neighborhood of the range of $\wla|_{[0,\wtau_1-\epsilon]}$ 
the maximized Hamiltonian vector field is ill-defined on the zero level set of $\Psi$, else it coincides with $\vF{0} + u_1 \vF{1} - \mathrm{sgn}({\psi})\vPsi$. 
By Assumption \ref{NT}, in this neighborhood the set $\{\Psi = 0 \}$ has $n_1$ connected components, transversal to $\wla(\cdot)$. 
By means of the implicit function theorem, we can express the hitting  times of the integral curves of $\vF{0} + u_1 \vF{1} - \mathrm{sgn}({\psi})\vPsi$
with each connected component as a smooth function of their starting point.
\begin{proposition}
\label{prop:s1} 
There exists a neighborhood $\mathcal{U}$ of $\well_0$ in $T^*M$ such that for any $ \ell \in \mathcal{U}$ there exists a unique $s_{11} = s_{11}(\ell)$ such that
$s_{11}(\well_0) = \sss_{11}$ and
\[
\Psi\circ \exp (s_{11}(\ell) \vH1^{\sigma_{01}})(\ell) = 0 \quad \forall \ell\in \mathcal{U}.
\]
\end{proposition}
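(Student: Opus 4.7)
The plan is to apply the implicit function theorem to the smooth scalar function
\[
\Phi(s,\ell) := \Psi\circ\exp\bigl(s\,\vH{1}^{\sigma_{01}}\bigr)(\ell)
\]
at the point $(s,\ell) = (\sss_{11},\well_0)$. The Hamiltonian $H_1^{\sigma_{01}} = F_0 + u_1 F_1 + \sigma_{01}\Psi$ is smooth on all of $T^*M$, hence $\vH{1}^{\sigma_{01}}$ and its flow are smooth in $(s,\ell)$ on a full neighborhood of $(\sss_{11},\well_0)$, and so is $\Phi$.

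For the first hypothesis, since $\wla$ solves $\dot{\wla} = \vH{1}^{\sigma_{01}}\circ\wla$ on $[0,\sss_{11}]$ with $\wla(0) = \well_0$, one has $\wla(\sss_{11}) = \exp(\sss_{11}\vH{1}^{\sigma_{01}})(\well_0)$, and therefore $\Phi(\sss_{11},\well_0) = \psi(\wxi(\sss_{11})) = 0$ by the defining property of $\sss_{11}$. The crucial step is the computation of the partial derivative in $s$: by the standard formula for differentiating a scalar along a Hamiltonian flow,
\[
\partial_s\Phi(s,\ell) = \bigl\{H_1^{\sigma_{01}},\Psi\bigr\}\bigl(\exp(s\vH{1}^{\sigma_{01}})(\ell)\bigr).
\]
Since $\Psi = \psi\circ\pi$ has a vertical Hamiltonian vector field ($\vPsi = (-D\psi\circ\pi,0)$), the projection of $\vH{1}^{\sigma_{01}}$ onto $M$ is simply $h_1 = f_0+u_1 f_1$, so
\[
\partial_s\Phi(\sss_{11},\well_0) = L_{h_1}\psi(\wxi(\sss_{11})),
\]
and Assumption~\ref{NT}, applied at $t = \sss_{11}\in\wI_1$ where $\wh{h}_t \equiv h_1$, makes this quantity nonzero.

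The implicit function theorem then yields a neighborhood $\mathcal{U}$ of $\well_0$, an interval $J$ around $\sss_{11}$, and a unique smooth function $s_{11}\colon\mathcal{U}\to J$ with $s_{11}(\well_0) = \sss_{11}$ and $\Phi(s_{11}(\ell),\ell) = 0$ on $\mathcal{U}$; the uniqueness asserted in the statement is the local uniqueness furnished by IFT inside $J$. I do not expect any real obstacle: the proposition merely packages the transversality built into Assumption~\ref{NT} as a smooth-dependence statement, and the only point warranting a little care is observing that the $\sigma_{01}\Psi$ summand in $H_1^{\sigma_{01}}$ contributes only a vertical term to the Hamiltonian vector field, so it drops out of the transversality computation and leaves $L_{h_1}\psi(\wxi(\sss_{11}))$ as the true derivative to be checked.
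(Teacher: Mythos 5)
Your proposal is correct and follows essentially the same route as the paper: both reduce the problem to the observation that $\Psi\circ\exp(s\,\vH{1}^{\sigma_{01}})(\ell)=\psi\circ\exp(s\,h_1)(\pi\ell)$ (the $\sigma_{01}\Psi$ term contributing only a vertical component to the Hamiltonian vector field), identify the $s$-derivative at $(\sss_{11},\well_0)$ with $L_{h_1}\psi(\wxi(\sss_{11}))$, invoke Assumption~\ref{NT} for its nonvanishing, and conclude by the implicit function theorem.
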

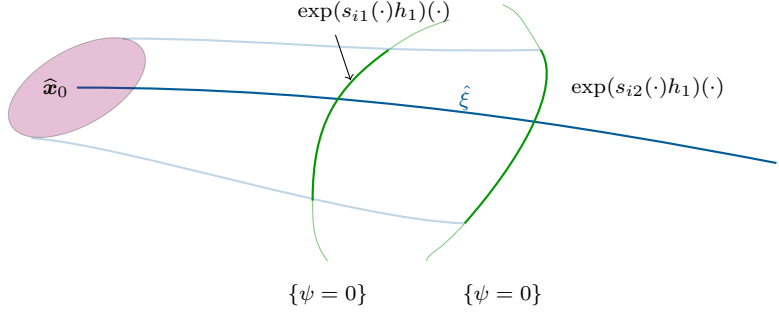
\begin{figure}
\begin{center}
  \begin{tikzpicture}
    \draw [color=blue8, thick](-.6, 0.5) .. controls (3, 0.5) and (6,0) .. (8.6, -.5); %

    \draw [color=green6, opacity=0.5](3.5, 1) .. controls (3.85, 1.25) and (4, 1.3) .. (4.3, 1.4); %

    \draw [color=green6, thick](3.5, 1) .. controls (2.8, 0.5) and (2.5,0) .. (2.5, -1); 

    \draw [color=green6, opacity=0.5](2.7, -1.8) .. controls (2.5, -1.55) and (2.5,-1.3) .. (2.5, -1); %

        \draw [color=green6, opacity=.5](5.5, 1) .. controls (5.2, 1.5) and (5.2,1.5) .. (5, 1.6); 

        \draw [color=green6, thick](5.5, 1) .. controls (5.8, 0.5) and (5.2,-.5) .. (4.5, -1.3); 

        \draw [color=green6, opacity =.5](4.5, -1.3) .. controls (4.15, -1.7) and (4,-1.7) .. (4, -1.8); 
   \draw [color=blue8, opacity=.25, thick] (-1.2, -.17) .. controls (0, -.25) and (3,-1.25) .. (4.5, -1.3); %
   
   \draw [color=blue8, opacity=.25, thick] (0, 1.15) .. controls (2, 1.15) and (3,.95) .. (5.5, 1); %


   \draw[fill=red8, opacity=.25] (-.6, .5) circle [x radius=1cm, y radius=5mm, rotate=30]; %
   
  \node[anchor=east] at (-.6,0.5){$\wbx_{0}$}; %
    \node[anchor=south] at (4.5,0.1){${\color{blue8}\hat\xi }$}; %
    \node[anchor=south] at (2.7,-2.5){$\{\psi=0\}$}; %
    \node[anchor=south] at (5,-2.5){$\{\psi=0\}$}; %

    \node[anchor=west] at (2.2,1.5){$\exp (s_{i1}(\cdot)h_1)(\cdot)$}; %
   \draw [->] (2.8, 1.2) -- (3, .6); %

    \node[anchor=west] at (5.8,0.5){$\exp (s_{i2}(\cdot)h_1)(\cdot)$}; %
  \end{tikzpicture}
\end{center}\caption{The times $s_{1i}$, $i=1, 2$}
\end{figure}
\begin{proof}
Since $\Psi\circ\exp (t \vH1^{\sigma_{01}})(\ell)  = \psi\circ\exp (t h_1 )(\pi\ell)$, it suffices to notice that 
\[
\left. \dfrac{\partial}{\partial t}  \psi\circ\exp (t h_1 )(\pi\ell)\right\vert_{(\sss_{11}, \well_0)} = \liede{h_1}{\psi}{\wxi(\sss_{11})}.
\] 
Thus, Assumption \ref{NT} yields the result. \hfill $\square$
\end{proof}
The curves   $\exp(t\vH1^{\sigma_{01}})(\ell)$ cross transversally the zero 
level set of $\Psi$ (thanks to the fact that they project on the integral curves of $f_0+u_1 f_1$ and to Assumption \ref{NT}), so that  
for $t$ in a sufficiently small right neighborhood of $s_{11}(\ell)$ the maximized Hamiltonian on the points $\exp(t\vH1^{\sigma_{01}})(\ell)$
is  $H_1^{\sigma_{02}}$, that is, $\vH1^{\sigma_{01}}$ is no more the vector field associated with the maximized Hamiltonian.
On the other hand, $H_1^{\sigma_{02}}$ is the maximized Hamiltonian also at the points
\begin{equation} \label{eq: flusso dopo s11}
\exp((t-s_{11}(\ell))\vH1^{\sigma_{02}})\circ\exp(s_{11}(\ell)\vH1^{\sigma_{01}})(\ell), \quad \ell\in \mathcal{U}.
\end{equation}
Then, for $t$ in a right neighborhood of $s_{11}(\ell)$, 
the maximized Hamiltonian flow is \eqref{eq: flusso dopo s11}.
Iterating the same argument as above, we can prove the following
\begin{proposition}
\label{prop:s1i} For every $i=1,\ldots,n_1$ and possibly shrinking the neighborhood $\mathcal{U}$, 
there exists a unique smooth function $s_{1i} \colon \mathcal{U} \to \mathbb{R}$ such that
\[
\Psi\circ 
\exp (s_{1i}(\ell)- s_{1\, i-1}(\ell)) \vH1^{\sigma_{0\, i}} \circ \cdots \circ 
\exp (s_{11}(\ell) \vH1^{\sigma_{01}})(\ell) = 0 \quad \forall \ell\in \mathcal{U}
\]
and $s_{1i}(\well_0) = \sss_{1i}$.

Moreover, the differential of $s_{1i}$ at $\well_0$ is given by
\[
\langle d s_{1i} (\well_0),\delta \ell \rangle =
-\,\frac{L_{\pi_*\delta \ell} \wh\psi_{\sss_{1i}}(\wbx_0)}{L_{g_1} \wh\psi_{\sss_{1i}}(\wbx_0)}.
\]
\end{proposition}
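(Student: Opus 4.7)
The plan is to proceed by induction on $i$, with base case $i=1$ being Proposition~\ref{prop:s1} itself. For the inductive step, assume $s_{11}, \ldots, s_{1,i-1}$ have already been constructed as smooth functions on a common neighborhood $\mathcal{U}$ of $\well_0$, with $s_{1j}(\well_0) = \sss_{1j}$. I would then define the smooth map
\[
\Phi_i(t, \ell) := \Psi \circ \exp\bigl((t - s_{1,i-1}(\ell))\vH1^{\sigma_{0i}}\bigr) \circ \exp\bigl((s_{1,i-1}(\ell) - s_{1,i-2}(\ell))\vH1^{\sigma_{0,i-1}}\bigr) \circ \cdots \circ \exp\bigl(s_{11}(\ell)\vH1^{\sigma_{01}}\bigr)(\ell),
\]
and apply the implicit function theorem to $\Phi_i(t,\ell)=0$ at $(t,\ell)=(\sss_{1i},\well_0)$ to produce $s_{1i}$.

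The key simplification is that $\vPsi(\ell) = (-D\psi(\pi\ell), 0)$ has zero projection onto $M$. Hence every vector field $\vH1^{\sigma_{0j}} = \vF{0} + u_1 \vF{1} + \sigma_{0j}\vPsi$ projects to $h_1$, so the composed cotangent-bundle flow defining $\Phi_i$ projects on $M$ to the single flow $\exp(t h_1)(\pi\ell)$. Starting from $\ell = \well_0$, this projection is exactly $\wxi(t)$ on $[0, \wtau_1]$, so $\Phi_i(\sss_{1i}, \well_0) = \psi(\wxi(\sss_{1i})) = 0$, while
\[
\partial_t \Phi_i(\sss_{1i}, \well_0) = L_{h_1}\psi(\wxi(\sss_{1i})) \neq 0
\]
by Assumption~\ref{NT}. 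The implicit function theorem then yields the smooth function $s_{1i}$ on a possibly smaller $\mathcal{U}$, with $s_{1i}(\well_0)=\sss_{1i}$.

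For the differential formula I would exploit the same projection argument. Since $\wS{t} = \exp(t h_1)$ on $[0, \wtau_1]$, and $g_1$ coincides with $h_1$ there (the pullback of a vector field by its own flow is itself), the defining equation for $s_{1i}$ reduces, after projection, to the scalar identity $\psi\bigl(\exp(s_{1i}(\ell) h_1)(\pi\ell)\bigr) = 0$. Differentiating implicitly at $\ell = \well_0$ along $\delta\ell \in T_{\well_0}(T^*M)$ yields
\[
L_{g_1}\wpsi{\sss_{1i}}(\wbx_0)\,\langle ds_{1i}(\well_0), \delta\ell\rangle + L_{\pi_*\delta\ell}\wpsi{\sss_{1i}}(\wbx_0) = 0,
\]
and solving gives exactly the formula in the statement. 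The only delicate point is the notational bookkeeping of the telescoping exponents $s_{1j}(\ell)-s_{1,j-1}(\ell)$ in the composed flow, but the observation that $\vPsi$ drops out under $\pi_*$ collapses the whole tower of cotangent-bundle flows to the single $M$-level flow of $h_1$, after which everything reduces to the one-variable implicit function theorem exactly as in Proposition~\ref{prop:s1}; I do not expect any serious obstacle beyond this.
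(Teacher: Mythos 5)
Your proposal is correct and follows exactly the route the paper intends: the paper proves Proposition~\ref{prop:s1} via the identity $\Psi\circ\exp(t\vH1^{\sigma_{01}})(\ell)=\psi\circ\exp(th_1)(\pi\ell)$ plus Assumption~\ref{NT} and the implicit function theorem, and then disposes of Proposition~\ref{prop:s1i} with the single sentence ``iterating the same argument as above''; your induction, with the observation that $\pi_*\vPsi=0$ collapses the tower of cotangent flows to $\exp(th_1)(\pi\ell)$, is precisely that iteration made explicit. The derivation of the differential formula (including the identification $L_{g_1}\wpsi{\sss_{1i}}(\wbx_0)=L_{h_1}\psi(\wxi(\sss_{1i}))$) is also the intended computation, which the paper states without proof.
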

Taking advantage of the functions $\sss_{1i}$ defined here above, 
we can write the maximized Hamiltonian flow  on the set $\{(t,\ell)\in \mathbb{R}\times \mathcal{U} \colon 0\leq t\leq s_{1n_1}(\ell)\}$ 
as the following concatenation:
\begin{equation}\label{eq: flusso prima s11}
\mathcal{H}_t(\ell)= \exp\big((t-s_{1\, i-1}(\ell) )\vH1^{\sigma_{0i}}\big)\circ
\mathcal{H}_{s_{1\, i-1}(\ell)}(\ell) \qquad \begin{matrix} t\in[s_{1\, i-1}(\ell), s_{1 i}(\ell)], \\
i=1,\ldots,n_1,
\end{matrix}
\end{equation}
where we defined $s_{10}(\ell)\equiv \sss_{10}=0$. 
\begin{remark}
Actually, the functions $s_{1i}$ depend only on the projection $\pi \ell$. We write them as functions of $\ell$ for symmetry with other functions $s_{3i}$ that 
will be defined below. 
\end{remark}
Reasoning as above, we see that, for $t$ in a right neighborhood of $s_{1n_1}(\ell)$, the maximized Hamiltonian at the points
 $\exp (t- s_{1 n_1}(\ell)) \vH1^{\sigma_{0\,n_1+1}} \circ
\mathcal{H}_{s_{1n_1}(\ell)}(\ell)$
is $H_1^{\sigma_{0\,n_1+1}}$; in particular, $H_1^{\sigma_{0\,n_1+1}}$ is the maximized Hamiltonian along its integral curves until such curves 
intersect the hypersurface 
$\{H_2 - H_1^{\sigma_{0\,n_1+1}\!}=0\}$. 
As above,
thanks to the regularity assumptions (in this case Assumption \ref{RS}), we can  characterize the intersection time as a smooth function of the initial point $\ell$.
\begin{proposition}
\label{prop:tau1}
Possibly shrinking $\cU$, for any $\ell \in \cU$ there exists a unique $\tau_1 = \tau_1(\ell)$ such that
$\tau_1(\lo) = \wh\tau_1$ and
\[
\big(H_2-H_1^{\sigma_{0\, n_1+1}}\big)\circ\exp \big((\tau_1(\ell)- s_{1 n_1}(\ell)) \vH1^{\sigma_{0\,n_1+1}}\big) \circ
\mathcal{H}_{s_{1n_1}(\ell)}(\ell)=0
\]
Moreover, $\tau_1(\ell)$ is smooth and its differential at $\well_0$ is 
\begin{multline}
\label{eq:dtau1}
 \scal{d\tau_1(\lo)}{\dl} = 
 \dfrac{1}{\dueforma{\vH{1}^{\sigma_{0\, n_1+1}}}{\vH{2}}(\well_1)}\\
\times\Big\{
- \, \dueforma{\cHref_{\wh\tau_1*}\dl}{(\vH{2} - \vH{1}^{\sigma_{0\,n_1+1}})(\well_1)} \\
 + 2\sum_{i=1}^{n_1}\sigma_{0i}\dfrac{ \lieder{\pi_*\dl}{\wh\psi_{\wh s_{1i}}}\, \lieder{g_2 - g_1}{\wh\psi_{\wh s_{1i}}}}
{\lieder{g_1}{\wh\psi_{\wh s_{1i}}}}
\Big\}
\qquad 
\end{multline}
\end{proposition}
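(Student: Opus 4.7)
The plan is to apply the implicit function theorem to the scalar equation $F(t,\ell)=0$, where
\[
F(t,\ell) := \big(H_2 - H_1^{\sigma_{0, n_1+1}}\big)\circ\exp\!\big((t - s_{1n_1}(\ell))\vH1^{\sigma_{0, n_1+1}}\big)\circ\mathcal{H}_{s_{1n_1}(\ell)}(\ell),
\]
in a neighborhood of $(\wh\tau_1,\well_0)$. At the base point the inner composition evaluates to $\well_1 = \wla(\wh\tau_1)$, because $\wla|_{[\sss_{1n_1},\wh\tau_1]}$ is an integral curve of $\vH1^{\sigma_{0, n_1+1}}$, and continuity of $H^{\max}\circ\wla$ at the switch $\wh\tau_1$ forces $H_1^{\sigma_{0, n_1+1}}(\well_1)=H_2(\well_1)$, so $F(\wh\tau_1,\well_0)=0$. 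The $t$-derivative is
\[
\partial_t F(\wh\tau_1,\well_0) = \{H_1^{\sigma_{0, n_1+1}}, H_2\}(\well_1) = \dueforma{\vH1^{\sigma_{0, n_1+1}}}{\vH2}(\well_1) > 0
\]
by Assumption~\ref{RS}, so the IFT yields a unique smooth function $\tau_1(\ell)$ with $\tau_1(\well_0)=\wh\tau_1$.

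For the differential, implicit differentiation gives $\scal{d\tau_1(\well_0)}{\delta\ell} = -\,\partial_\ell F(\wh\tau_1,\well_0)\delta\ell\,/\,\partial_t F(\wh\tau_1,\well_0)$, which already accounts for the denominator of \eqref{eq:dtau1}. To compute $\partial_\ell F$, I rewrite the inner flow as a concatenation with $\ell$-dependent durations,
\[
\Xi(\ell) := \exp\!\big(\Delta_{n_1+1}(\ell)\vH1^{\sigma_{0, n_1+1}}\big)\circ\cdots\circ \exp\!\big(\Delta_1(\ell)\vH1^{\sigma_{01}}\big)(\ell),
\]
where $\Delta_i(\ell):=s_{1i}(\ell)-s_{1,i-1}(\ell)$ (with the conventions $s_{10}\equiv 0$ and $s_{1,n_1+1}\equiv\wh\tau_1$), and compare it with the reference Hamiltonian flow $\cHref_{\wh\tau_1}$, obtained from the same concatenation with frozen durations $\sss_{1i}-\sss_{1,i-1}$. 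Differentiation at $\well_0$ then yields
\[
d\Xi(\well_0)\,\delta\ell = \cHref_{\wh\tau_1*}\delta\ell + \sum_{i=1}^{n_1}\scal{ds_{1i}(\well_0)}{\delta\ell}\,J_i,
\]
where $J_i$ is the pushforward through the remaining flow pieces of the jump vector $\vH1^{\sigma_{0i}}(\wla(\sss_{1i})) - \vH1^{\sigma_{0,i+1}}(\wla(\sss_{1i})) = 2\sigma_{0i}\vPsi(\wla(\sss_{1i}))$ (using $\sigma_{0,i+1}=-\sigma_{0i}$).

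Pairing with $d\varphi(\well_1):=d(H_2-H_1^{\sigma_{0, n_1+1}})(\well_1)$ via the identity $\scal{d\varphi}{v}=\dueforma{v}{\vecc{\varphi}}$, the $\cHref$-piece gives exactly $-\dueforma{\cHref_{\wh\tau_1*}\delta\ell}{(\vH2-\vH1^{\sigma_{0, n_1+1}})(\well_1)}$, matching the first summand inside the braces of \eqref{eq:dtau1}. For each jump term, after substituting $\scal{ds_{1i}(\well_0)}{\delta\ell}=-\lieder{\pi_*\delta\ell}{\wh\psi_{\sss_{1i}}}/\lieder{g_1}{\wh\psi_{\sss_{1i}}}$ from Proposition~\ref{prop:s1i}, the expression reduces to $2\sigma_{0i}\lieder{\pi_*\delta\ell}{\wh\psi_{\sss_{1i}}}\lieder{g_2-g_1}{\wh\psi_{\sss_{1i}}}/\lieder{g_1}{\wh\psi_{\sss_{1i}}}$.

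The main obstacle is this final identification, namely showing that the symplectic pairing $\dueforma{J_i}{\vecc\varphi(\well_1)}$ collapses to the base-point Lie derivative $L_{g_2-g_1}\wh\psi_{\sss_{1i}}(\wxo)$. This requires pulling both the pushforward of $\vPsi(\wla(\sss_{1i}))$ and the covector $d\varphi(\well_1)$ back along the reference flow $\wS{\cdot}$ to $\wxo$, and then using the pullback definitions \eqref{pullback fields}--\eqref{wbeta e wpsi} together with the symplecticity of the Hamiltonian flow to convert the resulting cotangent pairing into the Lie derivative on $M$ of $\wh\psi_{\sss_{1i}}$ along the difference $g_2-g_1$ of pullback fields at $\wxo$.
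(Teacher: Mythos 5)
Your proposal is correct and follows essentially the same route as the paper: the paper's proof of Proposition~\ref{prop:tau1} consists precisely of applying the implicit function theorem after observing that the $t$-derivative at $(\wh\tau_1,\well_0)$ equals $\dueforma{\vH{1}^{\sigma_{0\,n_1+1}}}{\vH{2}}(\well_1)$, which is positive by Assumption~\ref{RS}. Your additional computation of $d\tau_1(\well_0)$ — decomposing the flow into pieces with $\ell$-dependent durations, collecting the jump vectors $2\sigma_{0i}\vPsi$, and pulling everything back to $\wbx_0$ — is a sound reconstruction of the derivation of \eqref{eq:dtau1}, which the paper states without proof.
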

\begin{proof}
The proof is analogous to that of Proposition~\ref{prop:s1}. It 
suffices to notice that 
\begin{multline}
\left.\frac{\partial}{\partial t}\big(H_2-H_1^{\sigma_{0\,n_1+1}}\big)\circ\exp \big((t- s_{1 n_1}(\ell)) \vH1^{\sigma_{0\,n_1+1}}\big) \circ
\mathcal{H}_{s_{1n_1}(\ell)}(\ell)\right\vert_{(\wtau_1,\well_0)}\\ %
=\dueforma{\vH{1}^{\sigma_{0\,n_1+1}}}{\vH{2}}(\well_1),
\end{multline}
which is positive by Assumption~\ref{RS}. \hfill $\square$
\end{proof}

We extend the maximized Hamiltonian flow $\mathcal{H}_t$ to the whole first bang interval $[0,\tau_1(\ell)]$ setting 
\[
\mathcal{H}_t(\ell)=\exp \big((t- s_{1 n_1}(\ell)) \vH1^{\sigma_{0\,n_1+1}}\big) \circ
\mathcal{H}_{s_{1n_1}(\ell)}(\ell), \qquad t\in[s_{1n_1}(\ell),\tau_1(\ell)]. 
\]
The construction of the maximized Hamiltonian flow on the whole interval $[0,T]$ follows the same lines: we characterize 
the discontinuities of the vector field as smooth functions of the initial state, 
and then we concatenate the corresponding Hamiltonian flows (we are providing more details in Appendix~\ref{app tau}). The regularity assumptions are, as usual, crucial for this.
In particular, for initial conditions belonging to a suitable small neighborhood $\mathcal{U}$ of $\well_0$, the  maximized flow $\mathcal{H}_t(\ell)$
is defined by
\begin{equation} \label{max Ham}
\begin{cases}
\exp \big((t- \tau_1(\ell)) \vH2\big) \circ
\mathcal{H}_{\tau_1(\ell)}(\ell), & t\in[\tau_1(\ell),\tau_2(\ell)] \\
\exp \big((t- \tau_2(\ell)) \vH3^{\sigma_{20}}\big) \circ
\mathcal{H}_{\tau_2(\ell)}(\ell), & t\in[\tau_2(\ell),s_{31}(\ell)] \\
 \exp\big((t-s_{3\, i-1}(\ell) )\vH3^{\sigma_{2i}}\big)\circ
\mathcal{H}_{s_{3\, i-1}(\ell)}(\ell) \quad &  t\in[s_{3\, i-1}(\ell),s_{3 i}(\ell)] \\[-1mm]
& \quad\qquad i=1,\ldots,n_3+1,
\end{cases}
\end{equation}
where we put $s_{30}(\ell):=\tau_2(\ell)$ and $s_{3\, n_3+1}(\ell)\equiv T$.
\subsection{Hamiltonian Form of the Second Variation}
\label{sec:Hamiform}
In this section, we propose an alternative representation of the second variation,  
more compact and easier to deal with. To do that, we establish an isomorphism between $T^*_{\wbx_0}M
\times T_{\wbx_0}M$ and $T_{\well_0}(T^*M)$, and we map the Hamiltonians defined in \eqref{H1H2H3} to some Hamiltonian functions $G\se_i$ defined on $T^*_{\wbx_0}M
\times T_{\wbx_0}M$; we then express the second variation $J\se$ in terms of these Hamiltonians. The new expression 
of the second variation highlights its links with  the Hamiltonian vector fields and with the maximized Hamiltonian flow.

First, we define the following  anti-symplectic isomorphism\footnote{in particular,
if $\varsigma$ is the standard symplectic  form on the product $T^*_{\wbx_0}M
\times T_{\wbx_0}M$, then $\varsigma=-\boldsymbol{\sigma}\circ (\iota,\iota).$}
 between the space  $T^*_{\wbx_0}M
\times T_{\wbx_0}M$ and the tangent bundle $T_{\well_0}(T^*M)$: 
\[
\iota(\delta p,\delta x)=(-\delta p+A[\delta x,\cdot],\delta x),
\]
where $A$ is the symmetric bilinear form on $T_{\wbx_0}M$ defined by
\begin{align*}
& A[\delta x,\delta y]:= D^2 \Big(
-\wh \beta - \int_{\wI_1\cup \wI_3} \big|\wh \psi_s\big| ds\Big)
(\wbx_0)[\delta x,\delta y] \\
& \! -\!  2 a_0 \! \sum_{i=1}^{n_1}
\! (-1)^i \frac{
\lieder{\delta x}{ \wh \psi_{\sss_{1i}}} 
\lieder{\delta y}{ \wh \psi_{\sss_{1i}}}}
{
\lieder{g_1}{ \wh \psi_{\sss_{1i}}}}
 \! -\!  2 a_2 \! \sum_{i=1}^{n_3} \!  (-1)^i\frac{
\lieder{\delta x}{ \wh \psi_{\sss_{3i}}}
\lieder{\delta y}{ \wh \psi_{\sss_{3i}} }}
{\lieder{g_3}{ \wh \psi_{\sss_{3i}}}}.
\end{align*}
We then set
\begin{equation}\label{eq:Gsecondo}
\vGse{1} := \iota^{-1}  \vH{1}^{\sigma_{01}}(\well_0), \;\; \vGse{2}:= \iota^{-1}  \widehat{\mathcal{H}}_{\wtau_{1}*}^{-1} \vH{2}(\well_{1}) , 
\;\; \vGse{3} := \iota^{-1}  \widehat{\mathcal{H}}_{\wtau_{2}*}^{-1} \vH{3}^{\sigma_{21}}(\well_{2}).    
\end{equation}
By computation (see Appendix~\ref{app H}), one can see that each $\vGse{i}$ is the constant  Hamiltonian vector field associated with the following linear Hamiltonian functions on $T^*_{\wxo}M \times T_{\wxo}M $:
\begin{align}
 \begin{split}
G_1\se  (\dep, & \dx) = \scal{\dep}{g_1(\wxo)} + \liederder{\dx}{g_1}{\left(\wh\beta + \int_{\wI_3}\big|\wh\psi_s\big|ds\right)} \\ 
 & + a_0 (-1)^{n_1} \lieder{\dx}{\wh\psi_{\wh\tau_1} }
+ 2 a_2 \sum_{i=1}^{n_3} (-1)^i \dfrac{ 
\lieder{g_1}{\wh\psi_{\wh s_{3i}}} \lieder{\dx}{\wh\psi_{\wh s_{3i}}}}
{\lieder{g_3}{\wh\psi_{\wh s_{3i}}}},  
\label{G1}
\end{split}\\
\begin{split}
G_2\se (\dep, & \dx) = \scal{\dep}{g_2(\wxo)} + \liederder{\dx}{g_2}{\left(\wh\beta + \int_{\wI_3}\big|\wh\psi_s\big|ds\right)}  \\
 &+2 a_2\sum_{i=1}^{n_3} (-1)^i \dfrac{
\lieder{g_2}{\wh\psi_{\wh s_{3i}}} \lieder{\dx}{\wh\psi_{\wh s_{3i}}}}
{\lieder{g_3}{\wh\psi_{\wh s_{3i}}}} , \label{G2}
\end{split}\\
\begin{split}
G_3\se (\dep, & \dx) = \scal{\dep}{g_3(\wbx_0)} + \liederder{\dx}{g_3}{\wh\beta}  + 
a_2 (-1)^{n_3}\lieder{\dx}{\wh\psi_{T}}.\label{G3}
\end{split}
\end{align}
Finally, define a one-form $\omega_0(\delta x,\cdot)  \in T^*_{\wbx_0}M $ by 
$\iota^{-1}d\alpha_*\dx = \left(\omega_0(\delta x,\cdot), \dx \right)$. 
Then 
the second variation $J\se$ can be written as
\begin{align} 
J\se[\de]^2 &= \dfrac{1}{2}\Big(
\ep_1 G\se_1(\omega_0(\dx), \dx)  
+\ep_2 G\se_2\big( (\omega_0(\dx), \dx) + \ep_1 \vGse{1} \big) \label{eq:JGG}\\
&+\ep_3 G\se_3\big( (\omega_0(\dx), \dx) + \ep_1 \vGse{1} + \ep_2 \vGse{2}
\big)\Big) \nonumber
\end{align}
for every $\delta e=(\delta x,\boldsymbol{\varepsilon})\in V$.
The proof is just a straightforward application of the definitions (see Appendix~\ref{app H} for more details).

\medskip
Taking advantage of this formulation of the second variation, we are able to write
two conditions that are equivalent to the coerciveness of $J\se$ on $V$. 
We recall that, given any linear subspace $W\subset V$, then $J\se$ is coercive on $V$ if and only if it is coercive both on $W$ and on the 
orthogonal complement to $W$ with respect to the bilinear symmetric form $\mathfrak{J}$ associated with $J\se$. 
Let for instance 
choose $W$ as the linear space
\begin{equation} \label{eq: W}
W=\{\delta e=(\delta x,\boldsymbol{\varepsilon}) \in V: \varepsilon_3=0 \}.
\end{equation}
(this choice will be useful for future computations).
It is easy to prove that 
\begin{gather}
W^{\perp_{\mathfrak{J}}}=\{(\delta x,\boldsymbol{\ep}) \in V \colon \ep_1 = \langle d\tau_1(\well_0) ,d\alpha_* \delta x\rangle \},  \label{eq:W orth}
\\
J\se[\delta e]^2 =\frac{\ep_3}{2} \big(G\se_3-G\se_2 \big) \big((\omega_0(\delta x),x) +
\ep_1 \vGse{1}  +\ep_2 \vGse{2}\big) 
\qquad 
 \forall \delta e\!=\!(\delta x,\boldsymbol{\ep})\!\in \! W^{\perp_{\mathfrak{J}}},
\end{gather}
see Appendix~\ref{app H} for details.
\section{Main Result} \label{sec: main}
In this section, we state and prove the main result of the paper.
\begin{theorem} \label{main theorem}
Let $\wxi\colon[0,T]\to M$ be an admissible trajectory for the control system \eqref{eq: contr sys}-\eqref{eq: vincoli}-\eqref{eq: controllo} that satisfies Assumptions~\ref{NT}--\ref{hp:secvar}.
Then, the trajectory $\wxi$ is a strict strong-local minimizer for the OCP~\eqref{problema}.
\end{theorem}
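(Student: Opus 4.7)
The plan is to implement the Hamiltonian calibration scheme of \cite{AgSac,ASZ02,PoggSte04} in the present non-smooth setting, using the maximized flow $\mathcal{H}_t$ constructed in Section~\ref{sec: cstr max Ham}. Relying on the remark after \eqref{eq:secondvar}, I first enlarge $\alpha$ satisfying \eqref{eq: alpha beta} so that $J\se$ is coercive not merely on $V_0$ but on the whole space $V$, and define the Lagrangian submanifold $\Lambda_0 := \{d\alpha(x) : x \in \mathcal{O}\} \subset T^*M$ over a small neighborhood $\mathcal{O}$ of $\wbx_0$; notice that $\well_0 \in \Lambda_0$.

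The key analytic step is to show that, for every $t \in [0, T]$, the projection $\pi|_{\mathcal{H}_t(\Lambda_0)}$ is a local diffeomorphism onto a neighborhood of $\wxi(t)$. Through the anti-symplectic isomorphism $\iota$ and the Hamiltonians $G\se_i$ of \eqref{G1}--\eqref{G3}, the Jacobian of $x \mapsto \pi \circ \mathcal{H}_T(d\alpha(x))$ at $\wbx_0$ gets identified with a non-degenerate transformation of the Hamiltonian form \eqref{eq:JGG} of $J\se$, so that the coerciveness provided by Assumption~\ref{hp:secvar} forces this Jacobian to be an isomorphism. Consequently, the set $\bigcup_{t \in [0,T]} \{t\} \times \mathcal{H}_t(\Lambda_0)$ projects diffeomorphically onto a tubular neighborhood $\mathcal{U}$ of the graph of $\wxi$ in $[0, T] \times M$.

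For the calibration, pick any admissible pair $(\xi, u)$ with graph in $\mathcal{U}$ and endpoints $\wbx_0$, $\wbx_f$, and let $\tilde\lambda(t) \in \mathcal{H}_t(\Lambda_0)$ be the unique lift of $\xi(t)$. Because the Hamiltonian flow preserves the Lagrangian property, each $\mathcal{H}_t(\Lambda_0)$ is Lagrangian, so the Poincar\'e--Cartan form $\ell\,d\pi - H^{\max}(\ell)\,dt$ pulls back to a closed form on $\mathcal{U}$. Integrating this form along the loop built out of $\tilde\lambda$, $\wla$ and the constant curves at the common endpoints gives
\[
\int_0^T \Bigl(\langle \tilde\lambda(t), \dot\xi(t) \rangle - H^{\max}(\tilde\lambda(t))\Bigr) dt = \int_0^T \Bigl(\langle \wla(t), \dot{\wxi}(t) \rangle - H^{\max}(\wla(t))\Bigr) dt.
\]
Combined with $\langle \tilde\lambda(t), \dot\xi(t) \rangle - |u(t)\psi(\xi(t))| = \mathfrak{h}(u(t), \tilde\lambda(t)) \leq H^{\max}(\tilde\lambda(t))$, which holds with equality along the reference pair thanks to \eqref{PMP max}, this yields $\int_0^T |u\psi(\xi)|\,dt \geq \int_0^T |\wu\psi(\wxi)|\,dt$. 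Strictness follows because equality would force $\mathfrak{h}(u(t), \tilde\lambda(t)) = H^{\max}(\tilde\lambda(t))$ a.e., and Assumption~\ref{RA} guarantees uniqueness of the maximizer in a neighborhood of $\wla$, hence $u = \wu$ and $\xi = \wxi$.

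The main obstacle is the invertibility step: the discontinuities of $\vecc{H^{\max}}$ at both the bang-zero switching times $\wtau_1, \wtau_2$ and at the cost-zeros $\sss_{1i}, \sss_{3i}$ make $d(\pi\circ\mathcal{H}_T)|_{\Lambda_0}$ a piecewise object, and computing it requires differentiating the explicit expression \eqref{max Ham} of $\mathcal{H}_t$ while propagating the contributions of the hitting-time functions supplied by Propositions~\ref{prop:s1}, \ref{prop:s1i} and~\ref{prop:tau1}. The regularity Assumptions~\ref{NT}, \ref{SS} and~\ref{RS} are precisely what ensures that these pieces assemble into the compact expression \eqref{eq:JGG}; once this identification is achieved the remaining calibration is standard.
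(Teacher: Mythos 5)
Your overall architecture — enlarging $\alpha$ so that $J\se$ is coercive on all of $V$, taking $\Lambda_0=\{d\alpha(x)\}$, proving invertibility of $\pi\mathcal{H}_t|_{\Lambda_0}$ from the coerciveness, and then calibrating with the Poincar\'e--Cartan form — is exactly the paper's strategy. However, your strictness argument has a genuine gap. You claim that equality in the calibration inequality forces $u=\wu$ because ``Assumption~\ref{RA} guarantees uniqueness of the maximizer in a neighborhood of $\wla$.'' This is false near the switching times: Assumption~\ref{RA} gives uniqueness only \emph{along the reference extremal} for $t\neq\wtau_1,\wtau_2$, while the switching surface $\{|F_1|=|\Psi|\}$ passes through $\well_1$ and $\well_2$ and therefore meets every neighborhood of the range of $\wla$. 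A competing lift $\tilde\lambda(t)$ can ride along that surface on a right neighborhood of $\wtau_1$ (i.e.\ satisfy $t=\tau_1(\lambda_0(t))$ there), and on the surface the maximizer of $\mathfrak{h}(\cdot,\ell)$ is an entire interval of controls, so the equality $\mathfrak{h}(u(t),\tilde\lambda(t))=H^{\max}(\tilde\lambda(t))$ does not pin down $u(t)$. The paper must exclude this case by a separate argument: writing $\dot\xi$ as a convex combination $\mu(t)h_1+(1-\mu(t))h_2$, extracting a limit $\overline\mu$ along a sequence $t_n\to\wtau_1^+$, producing the admissible variation $\de=((g_2-g_1)(\wbx_0),1,-1,0)\in W$, and showing $J\se[\de]^2=\tfrac12 G_2\se(\vGse{1})\,\overline\mu/(1-\overline\mu)\le 0$, contradicting coerciveness. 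Without some version of this step your proof only yields that $\wxi$ is a (non-strict) strong-local minimizer.

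A secondary, lesser issue: your invertibility step speaks of ``the Jacobian'' of $x\mapsto\pi\mathcal{H}_T(d\alpha(x))$ being identified with a non-degenerate transformation of \eqref{eq:JGG}. Since the map is only piecewise-$C^1$ at $\wtau_1$ and $\wtau_2$ (the one-sided linearizations \eqref{eq: linear tau1}--\eqref{eq: linear tau2} differ), there is no single Jacobian to invert; the paper instead invokes Clarke's inverse function theorem and must show that \emph{every convex combination} (parameter $a\in[0,1]$) of the one-sided linearizations is nonsingular, each value of $a$ being excluded by a second-variation computation on $W$ and on $W^{\perp_{\mathfrak{J}}}$ respectively. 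You correctly flag this as the main obstacle, but the argument as written does not close it.
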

Theorem~\ref{main theorem} relies on the following result.
\begin{theorem} \label{cond suff}
Assume that the assumptions of Theorem~\ref{main theorem} are satisfied. Then there exist a neighborhood $U$ of $\wbx_0$ such that the set
\begin{equation}
\Lambda_0:=\{d\alpha(x)\colon x\in U\}
\label{eq:Lambda0}
\end{equation}
is a smooth simply-connected Lagrangian submanifold that contains $\well_0$ and, for every $t\in[0,T]$, the map
\begin{equation} \label{good proj}
\pi\mathcal{H}_t|_{\Lambda_0}
\end{equation}
is invertible onto a neighborhood of $\wxi(t)$ with piecewise-$C^1$ inverse.
\end{theorem}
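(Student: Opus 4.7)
The plan is to deduce the invertibility of $\pi\mathcal{H}_t|_{\Lambda_0}$ from the coerciveness of $J\se$, along the standard Hamiltonian-methods route: I would parametrize $\Lambda_0$ by its base point $x\in U$ through the map $x\mapsto d\alpha(x)$, reduce the question to the non-singularity of $dF_t(\wbx_0)$ for $F_t:=\pi\circ\mathcal{H}_t\circ d\alpha\colon U\to M$, and then show that any would-be kernel direction of $dF_{t^*}(\wbx_0)$ yields an admissible variation $(\delta x,\beps)\in V$ on which $J\se$ vanishes, contradicting Assumption~\ref{hp:secvar}.

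\textbf{Lagrangian structure and set-up.} First I would verify that $\Lambda_0$ has the required structure: it is the image of $U$ under the exact one-form $d\alpha$, hence automatically a smooth Lagrangian submanifold of $T^*M$; by \eqref{eq: alpha beta} it contains $\well_0=d\alpha(\wbx_0)$, and choosing $U$ to be a coordinate ball around $\wbx_0$ makes it simply connected and $\pi$-diffeomorphic to $U$. By the construction of Section~\ref{sec: cstr max Ham}, the flow $(t,\ell)\mapsto\mathcal{H}_t(\ell)$ is continuous on $[0,T]\times\mathcal{U}$ and smooth on each time-slab bounded by the hitting-time functions $s_{1i}(\ell),\tau_1(\ell),\tau_2(\ell),s_{3i}(\ell)$; hence $F_t$ is continuous on $[0,T]\times U$ and smooth in $x$ on each slab, with $F_0=\id_U$ and $dF_0(\wbx_0)=\id$. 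The inverse function theorem, applied on each slab separately, then reduces the claim to the non-singularity of $dF_t(\wbx_0)$ for every $t\in[0,T]$ and automatically yields a piecewise-$C^1$ inverse once such non-singularity is established.

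\textbf{The key step, via the second variation.} By the Remark following \eqref{admissible var} I would choose $\alpha$ so that $J\se$ is coercive on the full admissible space $V$, not only on $V_0$. Arguing by contradiction, assume $dF_{t^*}(\wbx_0)\delta x=0$ for some $t^*\in(0,T]$ and $\delta x\neq 0$. Using the anti-symplectic isomorphism $\iota$, the linear Hamiltonians $G_1\se,G_2\se,G_3\se$ of \eqref{G1}--\eqref{G3}, and the differential formulas for $s_{1i},s_{3i},\tau_1,\tau_2$ from Propositions~\ref{prop:s1i}--\ref{prop:tau1} and their analogues for the third arc, the linearization of $F_t$ along the concatenated flow \eqref{max Ham} can be written piecewise in terms of the constant Hamiltonian vector fields $\vGse{i}$ evaluated on $(\omega_0(\delta x),\delta x)\in T^*_{\wbx_0}M\times T_{\wbx_0}M$. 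The vanishing of $dF_{t^*}(\wbx_0)\delta x$ will produce a triple $\beps\in\mathbb{R}^3$ with $\sum\ep_i=0$ and $\delta x+\sum\ep_i g_i(\wbx_0)=0$, i.e.\ $(\delta x,\beps)\in V$; substituting in \eqref{eq:JGG} one finds $J\se[(\delta x,\beps)]^2=0$, in contradiction with coerciveness.

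\textbf{Main obstacle.} The technical heart of the argument is this last step: one must propagate the linearization of the piecewise maximized Hamiltonian flow through every $\{\Psi=0\}$-crossing on the two bang arcs and through both bang--zero switching surfaces, and identify the linearized perturbations of the junction times $\tau_1,\tau_2$ induced by $\delta x$ with precisely the coefficients $\ep_1,\ep_2,\ep_3$ that turn $(\delta x,\beps)$ into an admissible variation and make the three Hamiltonian contributions $\ep_i G\se_i(\cdot)$ assemble into $J\se$ in its Hamiltonian form \eqref{eq:JGG}. The jump formulas of Section~\ref{sec: cstr max Ham} and the Hamiltonian rewriting of Section~\ref{sec:Hamiform} are designed exactly for this matching; the bookkeeping across the $\psi$-zero-crossings, where $\mathcal{H}_t$ is only continuous, is where I expect the proof to be most delicate.
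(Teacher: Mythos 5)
Your overall strategy --- deducing the invertibility of $\pi\mathcal{H}_t|_{\Lambda_0}$ from the coerciveness of $J\se$ by turning a would-be kernel direction of the linearized projected flow into an admissible variation that violates positivity --- is the paper's strategy, but the execution has a genuine gap at the switching times. The reduction to ``non-singularity of $dF_t(\wbx_0)$ on each slab'' does not work there: for $t$ near $\wtau_1$ the map $\ell\mapsto\pi\mathcal{H}_t(\ell)$ is only piecewise-$C^1$ in $\ell$, the two branches being separated by the hypersurface $\{\ell:\tau_1(\ell)=t\}$ (cf.\ \eqref{eq: linear tau1}), so there is no single differential $dF_{t^*}(\wbx_0)$, and non-singularity of the two one-sided linearizations separately does \emph{not} imply local invertibility of a piecewise-$C^1$ map. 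The paper invokes Clarke's inverse function theorem, which requires that \emph{every} convex combination $(1-a)A_++aA_-$, $a\in[0,1]$, of the one-sided linearizations be nonsingular. A kernel vector of such a combination yields the variation $\delta e=\big(\pi_*\delta\ell,\,a\scal{d\tau_1(\well_0)}{\delta\ell},\,-a\scal{d\tau_1(\well_0)}{\delta\ell},\,0\big)\in W$, on which $J\se[\delta e]^2=\tfrac{a}{2}(1-a)\scal{d\tau_1(\well_0)}{\delta\ell}^2\,G\se_2(\vGse{1})\le 0$: the contradiction with coerciveness comes from the \emph{sign} of $G\se_2(\vGse{1})=-\dueforma{\vH1^{\sigma_{0\,n_1+1}}}{\vH2}(\well_1)<0$, guaranteed by Assumption~\ref{RS} via \eqref{G2G1}, and not from the vanishing of $J\se$. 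Your ``$J\se[\delta e]^2=0$'' argument only captures the endpoint cases $a\in\{0,1\}$ and misses the intermediate combinations, which is precisely where folding of a piecewise-linear map can occur even when both branches are invertible. The second switching time is handled identically on $W^{\perp_{\mathfrak{J}}}$ using $G\se_3(\vGse{2})<0$; this is also why the paper splits $V$ into $W$ and $W^{\perp_{\mathfrak{J}}}$, a device your sketch does not need to name but implicitly relies on.

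A second, more minor point: the delicacy you anticipate at the $\{\Psi=0\}$ crossings is not actually there for this theorem. Since $\vPsi(\ell)=(-D\psi(\pi\ell),0)$ is vertical, the jumps of the maximized Hamiltonian vector field across $\{\Psi=0\}$ affect only the fiber component, and $\pi\mathcal{H}_t(\ell)=\exp(th_1)(\pi\ell)$ on all of $[0,\tau_1(\ell)]$ regardless of the crossings; the zero-crossing bookkeeping enters the \emph{formulas} for $d\tau_1$, $d\tau_2$ and the $G\se_i$, but the projection itself is smooth away from $\wtau_1,\wtau_2$, which is why the paper checks invertibility only at those two times.
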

\begin{proof}(of Theorem~\ref{main theorem})
First of all, we define the following subsets of $\mathbb{R} \times T^*M$
\begin{align*}
\mathcal{O}_{1i} &=\{(t, \ell)\colon \ell \in \mathcal{U}, \ s_{1\, i-1}(\ell)\leq t\leq  s_{1 i}(\ell)\} \quad i=1,\ldots,n_1+1,\\
\mathcal{O}_2 &=\{(t, \ell)\colon \ell \in \mathcal{U}, \ \tau_1(\ell)\leq t\leq  \tau_2(\ell)\},\\
\mathcal{O}_{3i} &=\{(t, \ell)\colon \ell \in \mathcal{U}, \ s_{3\, i-1}(\ell)\leq t\leq  s_{3 i}( \ell)\} \quad i=1,\ldots,n_3+1,
\end{align*}
and the flow $\mathcal{H}\colon [0,T]\times  \mathcal{U} \to \mathbb{R} \times T^*M$ 
\[
\mathcal{H}(t,\ell)=(t,\mathcal{H}_t(\ell)).
\]
We also define the sets $\Omega_{ij}=\mathcal{O}_{ij}\cap (\mathbb{R} \times \Lambda_0)$, $i=1,3$, and 
$\Omega_2=\mathcal{O}_2\cap (\mathbb{R} \times \Lambda_0)$, and we call $\Omega$ the union of all these sets. 
Notice that  the restriction of $\mathcal{H}$ to each of the $\mathcal{O}_{ij},\ \mathcal{O}_{2}$ (as well as to each of the $\Omega_{ij},\
\Omega_2$) is smooth.
Moreover, thanks to Theorem \ref{cond suff},  the map $\pi\mathcal{H}\colon(t,\ell) \mapsto (t,\pi\mathcal{H}(t,\ell))$ is invertible with piecewise-$C^1$ inverse.
The points of non differentiability occur when $(t,\ell)$ belongs to the intersections  $\Omega_{1 \, n_1+1}\cap \Omega_2$ and
$\Omega_2\cap \Omega_{31}$. Indeed, we notice that
\[
\pi\mathcal{H}_t(\ell)=
\begin{cases}
\exp(t h_1)(\ell) & t \in [0, \tau_1(\ell)],\\
\exp((t-\tau_1(\ell)) h_2)\circ \exp(\tau_1(\ell) h_1) (\ell) & t \in [\tau_1(\ell),\tau_2(\ell)],\\
\exp((t-\tau_2(\ell)) h_3)\circ \exp((\tau_2(\ell)-\tau_1(\ell)) h_2)\\
\qquad \qquad \qquad \qquad \qquad \qquad \circ \exp(\tau_1(\ell) h_1) (\ell) 
\quad & t \in [\tau_2(\ell),T],
\end{cases}
\]
so that, at  the first switching 
time $\wtau_1$, the 
piecewise linearization  $\pi_*\mathcal{H}_{\wh \tau_1*}$ is given by 
\begin{equation}
\begin{cases} \label{eq: linear tau1}
\widehat{S}_{\wh\tau_1*}(\pi_*\delta \ell) \qquad & \mbox{for } \langle d\tau_1(\well_0),\delta \ell\rangle \geq 0,\\
\widehat{S}_{\wh\tau_1*}(\pi_*\delta \ell+ \langle d\tau_1(\well_0),\delta \ell\rangle(g_1-g_2)(\wbx_0)  ) \qquad & \mbox{for } 
\langle d\tau_1(\well_0),\delta \ell\rangle \leq 0,
\end{cases} 
\end{equation}
while at the second switching time
 $\wtau_2$, the 
piecewise linearization  $\pi_*\mathcal{H}_{\wh \tau_2*}$ is given by %
\begin{equation}
\begin{cases} \label{eq: linear tau2}
\widehat{S}_{\wh\tau_2*}(\pi_*\delta \ell
+\langle d\tau_1(\well_0),\delta \ell\rangle(g_1-g_2)(\wbx_0)) \qquad & \mbox{for } \langle d\tau_2(\well_0),\delta \ell\rangle \geq 0,\\
\begin{matrix}
\widehat{S}_{\wh\tau_2*}(\pi_*\delta \ell+\langle d\tau_1(\well_0),\delta \ell\rangle(g_1-g_2)(\wbx_0)  \\
+\langle d\tau_2(\well_0),\delta \ell\rangle(g_2-g_3)(\wbx_0)) 
\end{matrix} \qquad & \mbox{for } \langle d\tau_2(\well_0),\delta \ell\rangle \leq 0.
\end{cases} 
\end{equation}
Let $\omega_{(t,\ell)}=s_{\ell}-\max_{\abs{u} \leq 1 }\mathfrak{h}(u,\ell)dt$ be
the Poincar\'e-Cartan one-form (in the following we omit the dependence on the basepoint $(t,\ell)$). Lemma~3.3 in \cite{SZ16} guarantees that the one-form $\mathcal{H}^*\omega$
is exact on each of the $\Omega_{ij},\ \Omega_2$, and therefore on the whole $\Omega$.

Let now $\xi \colon [0, T] \to M$ be any admissible trajectory  of the control system \eqref{eq: contr sys}-\eqref{eq: vincoli} 
whose graph is contained in $\pi \mathcal{H}(\Omega)$, and
let $v(t)$ be the associated control function.
Define moreover the curves $\lambda_0(t)$, $\lambda(t)$  by the equalities 
$(t, \lambda_0 (t)) := (\pi\cH)^{-1}(t, \xi(t))$ and
$\lambda(t) := \mathcal{H}(t, \lambda_0(t))$; in particular, $\pi\lambda(t)=\xi(t)$. Consider the two paths in $\Omega$
\begin{gather*}
\gamma=\{(t,\lambda_0(t)) \colon t\in[0,T]\} \qquad
\widehat{\gamma}=\{(t,\well_0)  \colon t\in[0,T]\}.
\end{gather*}
The concatenation of $\widehat{\gamma}(\cdot)$ with $\gamma(T - \cdot)$ is a closed path in $[0,T]\times \Lambda_0$, so that 
$\int_{\widehat{\gamma}}\mathcal{H}^*\omega=\int_{\gamma}\mathcal{H}^*\omega$. 
%
In particular,
$ \int_{\widehat{\gamma}}\mathcal{H}^*\omega=\int_0^T |\wu(t) \psi(\wxi(t))|\;dt$, while 
\begin{equation} \label{eq: integral}
\int_{\gamma}\mathcal{H}^*\omega=\int_0^T s_{\lambda(t)} - \max_{\abs{u} \leq 1 }\mathfrak{h}(u,\lambda(t))\,dt \leq  \int_0^T   |v(t) \psi(\xi(t))|\;dt,
\end{equation}
and this implies that $\wxi$ is a strong-local minimizer.

\medskip
Let us now assume that $\int_0^T |\wu(t) \psi(\wxi(t))|\;dt=\int_0^T |v(t) \psi(\xi(t))|\;dt$, that is, the equality holds
in equation \eqref{eq: integral}. This
 implies that for a.e.~$t\in[0,T]$
\begin{equation} \label{eq: =qo}
 v(t) F_1(\lambda(t)) - |v(t) \psi(\xi(t))|=\max_{w\in[-1,1]} \big(w F_1(\lambda(t)) - |w \psi(\xi(t))| \big) .
\end{equation}
By continuity, for $t$ small enough, $(t, \lambda_0(t))$ belongs to $\Omega_{11}$, hence the quantity $ w F_1(\lambda(t)) - |w \psi(\xi(t))| $ 
attains its maximum only for $w=u_1$; equation \eqref{eq: =qo} yields that $v(t)=u_1$ a.e., so that $\xi(t)=\wxi(t)$ as long as $(t, \lambda_0(t)) \in \Omega_{11}$,
that is, for $t\in[0,\sss_{11}[$. For $t$ in a sufficiently small right neighborhood of $\sss_{11}$, $(t,\xi(t))$ belongs to $ \Omega_{11}$ or  $ \Omega_{12}$; in
both cases, \eqref{eq: =qo} implies that $v(t)=u_1$. We can proceed iteratively and obtain that $v(t)=u_1$ a.e. and $\xi(t)=\wxi(t)$ for $t\in[0,\wtau_{1}[$.

For $t$ in a sufficiently small right neighborhood of $\wtau_1$, three cases are possible: $(t,\lambda_0(t))$  may belong to $\Omega_{2}\setminus \Omega_{1\,n_1+1}$, to  $ \Omega_{1\,n_1+1}\setminus \Omega_{2}$, or to the intersection
$\Omega_{1\, n_1+1}\cap \Omega_{2}$. 

In the first case, the maximized Hamiltonian is attained for $w=0$ only, so that, reasoning as above, we obtain that $v(t)=0$ and then $\xi(t)=
\wxi(t)$ for $t\leq \wtau_2$.
If $(t,\lambda_0(t)) \in \Omega_{1\,n_1+1}\setminus \Omega_{2}$, the maximized Hamiltonian is attained for $w=u_1$ only and then \eqref{eq: =qo} yields that $v(t)=u_1$. 
This is impossible, since, by
Assumption~\ref{RS} and by continuity, in a neighborhood of $\well_1$ the function $F_1(\ell)-|\psi(\pi\ell)|$ is strictly decreasing along 
the integral lines of  $\vH{1}^{\sigma_{0\, n_1+1}}$. 

In the last case, for $t$ in a  sufficiently small right  neighborhood of $\wtau_1$ it holds $t = \tau_1(\lambda_0(t))$ which implies that 
\begin{equation}
\label{eq:dtau11}
1 = \scal{d\tau_1(\lambda_0(t))}{\dot\lambda_0(t)} \text{ for }\qo t.
\end{equation}
Moreover, \eqref{eq: =qo} implies that $v(t)$ has the same sign of $u_1$, so that $f_0 + v(t)f_1$ is a convex combination of $h_1$ and $h_2$, and there exists some $\mu(t) \in[0,1]$ such that $\dot{\xi}(t) = \mu(t) h_1(\xi(t))+(1-\mu(t))h_2(\xi(t))$. 

By computations $\dot\xi(t) = h_1(\xi(t)) + (\pi\cH_t)_*\dot\lambda_0(t)$ for $\qo t$, so that
\begin{equation}
\label{eq:plodot}
\pi_*\dot\lambda_0(t) = (1 - \mu(t)) \pi_* (\pi\cH_t)^{-1}_* (h_2 - h_1)(\xi(t)) \ \qo t.
\end{equation}
By compactness, there exists a sequence $t_n \to \wtau_1^+$ such that $\mu(t_n) \to \overline \mu \in [0,1]$.
Passing to the limit in \eqref{eq:dtau11}-\eqref{eq:plodot}   we obtain that
\[
\pi_*\dot\lambda_0(t_n) \to (1 - \overline\mu)(g_2 - g_1)(\wbx_0)
\]
and 
$1 = (1 - \overline\mu)\scal{d\tau_1(\well_0)}{d\alpha_*(g_2 - g_1)(\wbx_0)}$.
In particular, $\overline\mu \in [0,1)$ and the variation 
$\de = \left( (g_2 - g_1)(\wbx_0), 1, -1, 0\right)$ belongs to $W$.
Thus, by \eqref{eq:JGG} and \eqref{G2-G1} we obtain that
\[
J\se[\de]^2 = \dfrac{1}{2} G_2\se(\vGse{1}) \frac{\overline\mu}{1 - {\overline\mu}}, 
\]
which cannot be positive, due to \eqref{eq:RS1eq} and \eqref{G2G1}.
This contradicts Assumption \ref{hp:secvar}; thus this case is not possible.
Therefore, we must conclude that $v(t)=\wu(t)$ for a.e. $t\in[0,\wtau_2]$. 

Analogous computations show that $v(t)$ coincides with the reference control almost everywhere in the interval $[0,T]$, that is, $\xi=\wxi$.  \hfill $\square$
 \end{proof} 

 \begin{proof}(of Theorem~\ref{cond suff})
By construction, the manifold $\Lambda_0$ defined in \eqref{eq:Lambda0} 
is a horizontal Lagrangian submanifold of $T^*M$ containing $\well_0$. 

The map $\pi\mathcal{H}_t$ is the concatenation of smooth invertible mappings (flows).  To check its invertibility at $\well_0$, it is sufficient to
consider it at the switching times only: indeed, the map is invertible at $\well_0$
for every $t< \wtau_1$, since it is the flow of the field $h_1$, while at $t=\wtau_1$ folding phenomena could appear; if they don't, then we can conclude that
$\pi\mathcal{H}_t$ is invertible on the whole $[0,\wtau_2[$. Using the same argument, we can see that 
if the map is invertible for every $t\leq \wtau_2$, then it is invertible for every $t\in[0,T]$.

To verify the invertibility at the switching times, we use 
Clarke's inverse function theorem (see \cite{clarke-inv}), that is, we prove that all convex combination of the ``left'' and ``right'' linearizations \eqref{eq: linear tau1}--\eqref{eq: linear tau2}   have full rank. 

More precisely, for the first switching time, we show by contradiction that there is no 
$a\in[0,1]$ and no $\delta\ell\in T_{\well_0}\Lambda_0$, $\delta\ell\neq 0$, such that
\begin{equation} \label{non inv tau1}
(1-a)\widehat{S}_{\wh\tau_1*}(\pi_*\delta \ell)+a
\widehat{S}_{\wh\tau_1*}(\pi_*\delta \ell+(g_1-g_2)(\wbx_0) \langle d\tau_1(\well_0),\delta \ell\rangle) =0 .
\end{equation}
Indeed, assume that \eqref{non inv tau1} holds true for some $a$ and some 
$\delta \ell\neq 0$. Then
$\langle d\tau_1(\well_0),\delta \ell\rangle \neq 0$, and, since
$\widehat{S}_{\wh\tau_1*}$ is an isomorphism, then  
\[
\pi_*\delta \ell + a \scal{d\tau_1(\well_0)}{\delta \ell}
(g_1-g_2)(\wbx_0) = 0
\]
i.e.~$\delta e = \big(\pi_*\delta \ell,  a \scal{d\tau_1(\well_0)}{\delta \ell}, -  a \scal{d\tau_1(\well_0)}{\delta \ell},0
\big) \in W$. 
As before, thanks to \eqref{G2G1}-\eqref{G2-G1} and Assumption~\ref{RS}, it is possible to prove that 
\begin{align*}
J\se[\delta e]^2 &= -\frac{a}{2} \scal{d\tau_1(\well_0)}{\delta \ell} (G\se_2-G\se_1) \big( (\omega_0(\delta x),\delta x) + \ep_1 \vGse{1}\big)\\
&= \frac{a}{2}(1-a)\scal{d\tau_1(\well_0)}{\delta \ell}^2 G\se_2(\vGse{1})\leq 0,
\end{align*}
which contradicts the coerciveness of the second variation on $W$. 

Analogously, the linearization of the maximized flow at time $\wh\tau_2$ is invertible if for every $\delta\ell\in T_{\well_0}\Lambda_0$  and 
for every $a\in[0,1]$ satisfying
\begin{equation} \label{inv tau2}
\pi_*\delta \ell+\langle d\tau_1(\well_0),\delta \ell\rangle (g_1-g_2)(\wbx_0)+a\langle d\tau_2(\well_0),\delta \ell\rangle (g_2-g_3)(\wbx_0) = 0
\end{equation}
it must be $\dl = 0$.
Indeed, if \eqref{inv tau2} holds, then the variation
\[
\delta e = \big(\delta x,\langle d\tau_1(\well_0),d\alpha_*\delta x\rangle,\langle d(a\tau_2-\tau_1)
(\well_0),d\alpha_*\delta x\rangle,-a\langle d\tau_2(\well_0),d\alpha_*\delta x\rangle\big)
\]
is admissible and it is contained in $W^{\perp_{\mathfrak{J}}}$. 
Again, using \eqref{G3G2}, we observe that 
\[
  J\se[\delta e]^2 = \frac{a}{2}(1-a)\scal{d\tau_2(\well_0)}{d\alpha_*\dx}^2,
  G\se_3(\vGse{2})\,
\]
which cannot be positive due to Assumption~\ref{RS}. This contradicts the coerciveness of the second 
variation on $V$. The Theorem is proved. \hfill $\square$
\end{proof}
\section{An Example}\label{sec:example}
In this section, we apply our result to the following OCP 
\[
\min_{|u(\cdot)|\leq 1} \int_0^T |u(t)x_2(t)|\;dt
\]
subject to the control system
\begin{equation}
\begin{cases} \label{sys boizot}
\dot{x}_1=x_2\\
\dot{x}_2=u-\alpha x_2 \qquad \qquad  \alpha>0\\
x_1(0)=0,\ x_2(0)=0\\
x_1(T)= X>0, \ x_2(T)=0.
\end{cases}
\end{equation}
It models the problem of minimizing the consumption  of an academic electric vehicle moving with friction along a flat road; it has been studied in details in \cite{boizot}.
 The minimum time needed to reach the target point $(X,0)$ from $(0,0)$ is 
$T_{min}=\frac{1}{\alpha}\log \Big(\frac{1+\sqrt{1-e^{-\alpha^2 X}}}{1-\sqrt{1-e^{-\alpha^2 X}}}\Big)$; so, if $T<T_{min}$, there is no admissible trajectory
of \eqref{sys boizot}, and for $T=T_{min}$ the only admissible one is the minimum time trajectory, which is bang-bang; therefore, in the following we consider the case $T>T_{min}$. 
In \cite{boizot} it is proved that, for every fixed $\alpha>0$, the structure of the optimal control depends on the final time $T$ and the final point $X$; in particular,
given
\[
T_{lim} =\frac{1}{\alpha} \log \Big((1+\sqrt{2}) e^{\alpha^2 X} -1+\sqrt{(1+\sqrt{2}) e^{\alpha^2 X} -1)^2-1}\Big),
\]
the following facts hold:
\begin{itemize}
 \item if $T_{min}<T\leq T_{lim}$ the optimal control has the form \eqref{ref contr}, with $u_1=1$, $u_3=-1$, and switching times
 \begin{align}
 \wtau_1&=\frac{1}{\alpha} \log \Big(\frac{1}{2}\big(1+e^{\alpha T}-\sqrt{1+2 e^{\alpha T} +e^{2\alpha T}-4e^{\alpha^2X+\alpha T}}\big) \Big) \label{ex wtau1}\\
 \wtau_2&=
 \frac{1}{\alpha}\log\big(1-e^{\alpha \wtau_1}+e^{\alpha T}\big).
 \label{ex wtau2}
 \end{align}
 \item if $T>T_{lim}$, then the optimal control has a bang-singular-zero-bang structure.
\end{itemize}

We restrict our analysis to the case $T_{min}<T\leq T_{lim}$.
By integrating the system \eqref{sys boizot}, and taking into account the expression of the switching times, 
it is easy to see that 
the cost function $\psi(x_1,x_2)={x_2}$ is non-negative on the whole interval $[0,T]$, and vanishes only at the endpoints; thus, in this case we have $a_0=a_2=1$ and  $n_1=n_3=0$,
and, in particular, Assumption~\ref{SS} is satisfied.

Assumption~\ref{NT} can be easily checked by computations, while the validity of Assumption~\ref{PMP} (the candidate trajectories satisfy the PMP in the normal form) 
is proved in \cite{boizot}. 
In particular, the adjoint variable $p_1$ is constant in time, and
\begin{gather*}
p_1=\frac{e^{-\alpha\wtau_2}(e^{\alpha\wtau_1}-1)(1+e^{2\alpha(\wtau_2-\wtau_1)})}{e^{\alpha(\wtau_2-\wtau_1)}-1}\\
p_2(0)= \frac{1}{\alpha}\big(1-e^{-\alpha \wtau_1}\big)\big(p_1-1+e^{-\alpha \wtau_1}\big).
\end{gather*}

\noindent
Let us now verify that the other assumptions are met as well.
 
 \medskip
 \noindent
 {\it Assumption~\ref{RA}.} Since $\psi$ is always non-negative along the candidate trajectory, we can get rid of the absolute values in \eqref{max debole 1}-\eqref{max debole 2}. 
 In particular,
 we have to check the following
 \[
 \begin{cases}
  p_2(t)-x_2(t)>0 & t\in]0,\wtau_1[\\
  p_2(t)+x_2(t)<0 & t\in]\wtau_2,T[.
  \end{cases}
 \]
The weak inequalities hold by PMP. In particular,
$p_2(0)>x_2(0)=0$.
{
Assume that there exists some $\overline t \in [0, \wtau_1[$ such that $p_2(\overline t)=x_2(\overline t)$; then $\overline t $ is a minimum time for the function $\Delta  \colon t \in [0, \wtau_1 ] \mapsto p_2(t) - x_2 (t) \in \R$. We thus have
\begin{equation}
\Delta (\overline{t}) = 0, \qquad \dot \Delta (\overline{t}) = 0, \qquad \ddot \Delta (\overline{t}) \geq 0.
\end{equation}
Since $\ddot \Delta (t) = \alpha \left( 2 - p_1 + \alpha\Delta(t) \right) \geq \ddot \Delta (\overline{t})$ for any $t \in [0, \wtau_1]$, and $\Delta (\wtau_1) = 0$, we must have $\Delta (t ) \equiv 0$. This implies $p_1 = 2$ and $x_2(t) \equiv \frac{1}{\alpha}$, a contradiction.
%
This proves that Assumption 4 holds in the first bang arc.

The same reasoning applies in the second bang arc. By PMP, for every $t \in [\wtau_2,T]$ we have  $p_2(t)+x_2(t)\leq 0$ . Assume that there exists $\overline{t} \in ]\wtau_2, T]$ such that $p_2(\overline{t}) + x_2(\overline{t})  = 0$. Then $\overline{t}$ is a maximum point for the function $\Delta_2 \colon t \in  [\wtau_2, T ] \mapsto p_2(t) + x_2 (t) \in \R$. Thus 
\begin{equation}
\Delta_2 (\overline{t}) = 0, \qquad \dot \Delta_2 (\overline{t}) = 0, \qquad \ddot \Delta_2 (\overline{t}) \leq 0.
\end{equation}
Since $\ddot \Delta_2 (t) = \alpha \left( 2 - p_1 + \alpha\Delta_2(t) \right) \leq \ddot \Delta_2 (\overline{t})$ for any $t \in [ \wtau_2, T]$, and $\Delta_2 (\wtau_2) = 0$, we must have $\Delta_2 (t ) \equiv 0$. This implies $p_1 = 2$ and $x_2(t) \equiv \frac{-1}{\alpha}$, a contradiction.

}

 \medskip
 \noindent
 {\it Assumption~\ref{RS}.} First of all, we compute the expressions of the maximized Hamiltonians 
 \begin{gather*}
 H_1(p,x)=p_1 x_2 +p_2 (1-\alpha x_2) -x_2\qquad 
 H_2(p,x)=p_1 x_2 -\alpha p_2 x_2 \\ 
  H_3(p,x)=p_1 x_2 -p_2 (1+\alpha x_2) -x_2
 \end{gather*}
 In particular, we obtain
\begin{equation}
\big\{ H_1, H_2 \big\}=p_1-\alpha(x_2+p_2) \qquad
\big\{ H_2, H_3 \big\}=p_1+\alpha(x_2-p_2). 
\end{equation}

\noindent
Integrating the control system, we obtain that $x_2(\wtau_1)=\frac{1}{\alpha}\big(1-e^{-\alpha \wtau_1}\big)$, so that, using
the fact that $p_2(\wtau_1)=x_2(\wtau_1)$, we obtain 
\[
p_1(\wtau_1)-\alpha(x_2(\wtau_1)+p_2(\wtau_1))=\alpha x_2(\wtau_1) 
\frac{e^{-\alpha\wtau_2}(e^{-\alpha(\wtau_2-\wtau_1)}-e^{\alpha(\wtau_2-\wtau_1)}+2)}{e^{\alpha(\wtau_2-\wtau_1)}-1}.
\]
In particular, since $\wtau_2>\wtau_1$ and $x_2(\wtau_1)>0$, we just have to check the positivity of 
$e^{-\alpha(\wtau_2-\wtau_1)}-e^{\alpha(\wtau_2-\wtau_1)}+2$,
which is guaranteed whenever $e^{\alpha(\wtau_2-\wtau_1)}<1+\sqrt{2}$.
Using the expression for the switching times \eqref{ex wtau1}-\eqref{ex wtau2}, we can prove that this conditions holds for $T<T_{lim}$. 
Analogously, since $x_2(\wtau_2)=\frac{1}{\alpha}e^{-\alpha \wtau_2}(e^{\alpha \wtau_1}-1)$ and  $p_2(\wtau_2)=-x_2(\wtau_2)$ we obtain that 
\[
p_1(\wtau_2)+\alpha(x_2(\wtau_2)-p_2(\wtau_2))=\alpha x_2(\wtau_2)\Big(  
\frac{1+e^{2\alpha(\wtau_2-\wtau_1)}}{e^{\alpha(\wtau_2-\wtau_1)}-1}+2
\Big), \]
which is always positive.

 \medskip
 \noindent
 {\it Assumption~\ref{hp:secvar}.}
First of all, we compute the pull-back vector fields. Obviously, $g_1=h_1=f_0+f_1$.
To compute $g_2$ and $g_3$, we consider the functions 
\begin{gather*}
\varphi_2(x,t)=\exp(-th_1)_*h_2 \exp(t h_1)(x)\\ 
\varphi_3(x,t,s)=\exp(-th_1)_*\exp(-sh_2)_*h_3
 \circ\exp(sh_2)\circ\exp(t h_1)(x),
\end{gather*}
and notice that $g_2(x)=\varphi_2(x,\wtau_1)$ and $g_3(x)=\varphi_3(x,\wtau_1,\wtau_2-\wtau_1)$. The pull-back vector fields can be then computed developing $\varphi_2,\varphi_3$ in
powers of $t$ and $(t,s)$, respectively; we obtain
\[
g_2=f_0+ \frac{\eta(\wtau_1)}{\alpha}f_{01} \qquad
g_3=f_0-f_1+ \frac{\eta(\wtau_1)+\eta(\wtau_2)}{\alpha}f_{01},
\]
where $\eta(t)=1-e^{\alpha t}$ and $f_{01}=[f_0,f_1]$.

Let $(\delta x,\boldsymbol{\varepsilon})$ be an admissible variation contained in $V_0$; from $\delta x=(0,0)$ and the linear
independence of $f_1$ and $f_{01}$, we obtain that $(\ep_1,\ep_2,\ep_3)$ must satisfy the system
\[
 \begin{cases}
 \ep_1+\ep_2+\ep_3=0\\
 \ep_1-\ep_3=0\\
 \ep_2 \eta(\wtau_1)+\ep_3( \eta(\wtau_1)+\eta(\wtau_2))=0,
 \end{cases}
\]
so that, if $\wtau_1\neq \wtau_2$ (which is always verified for $T>T_{min}$), then $V_0$ is the trivial linear space, and the second variation is coercive  by definition.
\begin{remark}
For $T=T_{lim}$, Assumption~\ref{RS} is not satisfied, so we cannot apply our methods to prove optimality.
We believe that this is due to the fact that, whenever $T>T_{lim}$, the optimal control has the form bang-singular-zero-bang and, for bang-singular junctions, 
a necessary optimality condition is that the time-derivative of the difference of the maximized Hamiltonians along the two arcs is zero. By continuity, this conditions holds
also at the limit case $T=T_{lim}$.
\end{remark}
\section{Conclusions}\label{sec:final}
The paper contains a first attempt to extend the Hamiltonian methods  to problems where the cost has an $L^1$ growth with respect to the control and presents some 
non-smoothness issues as a function of the state.

 The family of costs under study is inspired by some examples from the literature, see 
 \cite{maurer-vossen,boizot}, 
 and we considered the case of a concatenation of arcs of the kind bang-zero-bang. 
 The result can be easily extended to the case, where more than three arcs are present, but in order to avoid a heavy notation and 
 still present all the difficulties due to the presence of a vanishing weight $\psi$, 
 we considered the case of two bang arcs only, while considering an arbitrary number of zeros for $\psi$.

The authors are currently considering the situation, where the extremal contains a singular arc, as the examples in \cite{boizot} show that such a situation may occur.

\appendix
\section*{Appendices}\label{appendix}
Here we give some hints for the computations appearing in Section \ref{sec:Ham}. 

First of all let us recall that for every Hamiltonian vector field $\vF{}$ and every Hamiltonian flow $\cK_t$, the following identity holds
for every fixed $t$:
\begin{equation}\label{eq:trasportoflusso}
\cK_{t*}\vF{} = \overrightarrow{F\circ{\cK_t}^{-1}}.
\end{equation}
\section{The Maximized Hamiltonian Flow: details on its construction} \label{app tau}
In Section~\ref{sec: cstr max Ham}, we saw that, for $t$ in a small left neighborhood of $\tau_1(\ell)$ and for any $\ell \in \mathcal{U}$, 
$\big(F_0+F_1 +\sigma_{0\, n_1+1}\psi(\pi(\ell))\big)\circ \mathcal{H}_t(\ell)$ is the maximized Hamiltonian.
Assumption~\ref{RS} implies that
the function 
\[\big(F_1(\ell)-|\psi(\pi\ell)|\big)\circ \exp \big((t- s_{1 n_1}(\ell)) \vH1^{\sigma_{0\,n_1+1}}\big) \circ\mathcal{H}_{s_{1n_1}(\ell)}(\ell)\]
is strictly decreasing with respect to $t$, for $t$ belonging to a neighborhood  of $\tau_1(\ell)$, so that 
$H_1^{\sigma_{0\,n_1+1}}$ cannot be the maximized Hamiltonian on $\exp \big((t- s_{1 n_1}(\ell)) \vH1^{\sigma_{0\,n_1+1}}\big) \circ\mathcal{H}_{s_{1n_1}(\ell)}(\ell)$ if $t>\tau_1(\ell)$. 
Indeed, for such $t$ the maximized Hamiltonian flow is
\[
\exp \big((t- \tau_1(\ell)) \vH2\big) \circ\mathcal{H}_{\tau_1(\ell)}(\ell).
\]
Along these curves, for $t>\tau_1(\ell)$, $H_2$ is the maximized Hamiltonian until 
the hypersurface $\{H_2-H_3^{\sigma_{21}}=0\}$ is reached. 
The time $\tau_2(\ell)$ for this to happen is, as $\tau_1(\ell)$, a smooth function of $\ell$, thanks to Assumption \ref{RS}. 
The proof is completely analogous to that of Proposition~\ref{prop:tau1}.
\begin{proposition}\label{prop:tau2}
Possibly shrinking $\cU$, for every $\ell \in \cU$ there exists a unique $\tau_2 = \tau_2(\ell)$ such that
$\tau_2(\lo) = \wh\tau_2$ and
\begin{equation*}
\left(H_3^{\sigma_{21}}-H_2 \right)\circ  \exp (\tau_2(\ell) - \tau_1(\ell))\vH{2}\circ \mathcal{H}_{\tau_1(\ell)}(\ell) = 0.
\end{equation*}
Moreover, the differential of $\tau_2(\ell)$ at $\well_0$ is given by 
\begin{align}
\label{eq:dtau2}
\scal{d\tau_2(\lo)}{\dl} &=  \dfrac{1}{\dueforma{\vH{2}}{\vH{3}^{\sigma_{21}}}(\well_2)}
\bigg\{
- \, \dueforma{\cHref_{\wh\tau_2*}\dl}{(\vH{3}^{\sigma_{21}} - \vH{2})(\well_2)} 
 \\ 
&    + \scal{d\tau_1(\lo)}{\dl}\,
\dueforma{\cHref_{\wh\tau_2 *}\cHref_{\wh\tau_1 *}^{-1}
\left(
\vH{2} - \vH{1}^{\sigma_{0\,n_1+1}}
\right)(\well_1)
}{\vH{3}^{\sigma_{21}} - \vH{2}}(\well_2) \nonumber
\\
& 
+ 2\sum_{i=1}^{n_1} \sigma_{0i} \dfrac{ \lieder{\pi_*\dl}{\wh\psi_{\sss_{1i}}}\, \lieder{g_3 - g_2}{\wh\psi_{\sss_{1i}}}}
{\lieder{g_1}{\wh\psi_{\sss_{1i}}}} 
\bigg\}. \nonumber
\end{align}
\end{proposition}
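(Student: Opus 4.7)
The statement is the second switching-time analogue of Proposition~\ref{prop:tau1}, so the natural plan is to mimic that proof, applying the implicit function theorem and then extracting the differential of $\tau_2$ from the chain rule. Define
\[
\Phi(t,\ell) := \bigl(H_3^{\sigma_{21}} - H_2\bigr) \circ \exp\bigl((t - \tau_1(\ell))\vH{2}\bigr) \circ \cH_{\tau_1(\ell)}(\ell),
\]
so that the desired equation reads $\Phi(\tau_2(\ell), \ell) = 0$. We have $\Phi(\wtau_2, \well_0) = 0$ by continuity of the maximized flow and the fact that the reference extremal crosses the switching surface at $\well_2$. A direct computation gives
\[
\partial_t \Phi(\wtau_2, \well_0) = \bigl\langle d(H_3^{\sigma_{21}} - H_2)(\well_2),\, \vH{2}(\well_2)\bigr\rangle = \bigl\{H_2, H_3^{\sigma_{21}}\bigr\}(\well_2) = \dueforma{\vH{2}}{\vH{3}^{\sigma_{21}}}(\well_2),
\]
which is strictly positive by Assumption~\ref{RS}. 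The implicit function theorem then produces, after possibly shrinking $\cU$, a unique smooth $\tau_2 \colon \cU \to \R$ with $\tau_2(\well_0) = \wtau_2$ and $\Phi(\tau_2(\ell), \ell) \equiv 0$.

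To establish the formula for $d\tau_2(\well_0)$, differentiate the identity $\Phi(\tau_2(\ell), \ell) \equiv 0$ in $\ell$ at $\well_0$ and solve for $\langle d\tau_2(\well_0), \dl\rangle$. All that remains is to compute $\partial_\ell \Phi(\wtau_2, \well_0)\dl$, which requires differentiating the piecewise composition of exponentials defining $\cH_t(\ell)$ in a neighborhood of $\well_0$. There are three independent sources of $\ell$-dependence in this composition: the initial point $\ell$ itself, the $n_1$ crossing times $s_{1i}(\ell)$ of the zero set of $\Psi$ (whose differentials are given by Proposition~\ref{prop:s1i}), and the first switching time $\tau_1(\ell)$ (whose differential is supplied by Proposition~\ref{prop:tau1}).

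Collecting these contributions via the chain rule and using the transport identity \eqref{eq:trasportoflusso}, the direct variation of the initial point produces the term $-\dueforma{\cHref_{\wtau_2*}\dl}{(\vH{3}^{\sigma_{21}} - \vH{2})(\well_2)}$; the variation of $\tau_1(\ell)$ contributes the mismatch $\vH{2} - \vH{1}^{\sigma_{0\,n_1+1}}$ evaluated at $\well_1$ and then propagated to $\well_2$ via $\cHref_{\wtau_2 *}\cHref_{\wtau_1 *}^{-1}$, paired against $\vH{3}^{\sigma_{21}} - \vH{2}$ at $\well_2$; and each variation of $s_{1i}(\ell)$ produces a correction which, after using the explicit formula for $ds_{1i}$, yields the summand $2\sigma_{0i}\,\lieder{\pi_*\dl}{\wh\psi_{\sss_{1i}}}\,\lieder{g_3 - g_2}{\wh\psi_{\sss_{1i}}}/\lieder{g_1}{\wh\psi_{\sss_{1i}}}$ characteristic of \eqref{eq:dtau2}. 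Dividing by the positive denominator $\dueforma{\vH{2}}{\vH{3}^{\sigma_{21}}}(\well_2)$ delivers the claimed expression.

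The main obstacle is the bookkeeping in the third step: each of the $n_1$ first-order corrections arising at a $\sss_{1i}$-crossing must be propagated through the remainder of the reference flow (both the rest of the first bang and the intermediate $H_2$-arc up to $\wtau_2$), and then paired against $\vH{3}^{\sigma_{21}} - \vH{2}$ at $\well_2$. Extracting the compact coefficient $\lieder{g_3 - g_2}{\wh\psi_{\sss_{1i}}}$ requires rewriting the resulting symplectic pairings in terms of Lie derivatives of the pulled-back function $\wh\psi_{\sss_{1i}}$, again by means of \eqref{eq:trasportoflusso}, exactly as in the derivation of \eqref{eq:dtau1}, but now traversing both bang arcs and the intermediate zero-control arc.
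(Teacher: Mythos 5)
Your proposal is correct and follows essentially the same route as the paper, which proves this proposition simply by declaring it ``completely analogous'' to Proposition~\ref{prop:tau1}: an application of the implicit function theorem whose key point is that the $t$-derivative equals $\dueforma{\vH{2}}{\vH{3}^{\sigma_{21}}}(\well_2)$, positive by Assumption~\ref{RS}. Your chain-rule bookkeeping for $d\tau_2(\well_0)$ (separating the contributions of the initial point, the crossing times $s_{1i}$, and $\tau_1$) is exactly the intended derivation of \eqref{eq:dtau2}, and is in fact more explicit than what the paper records.
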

We thus extend the maximized Hamiltonian flow to the interval $[0,\tau_2(\ell)]$ setting 
\[
\mathcal{H}_t(\ell)=\exp \big((t- \tau_1(\ell)) \vH2\big) \circ
\mathcal{H}_{\tau_1(\ell)}(\ell), \qquad t\in[\tau_1(\ell),\tau_2(\ell)]. 
\]
The remaining points of non-smoothness of the maximized Hamiltonian flow can be characterized in an analogous way, as the following proposition shows.
\begin{proposition}\label{prop:s2}
Possibly shrinking $\mathcal{U}$, for every $i=1,\ldots,n_3$ there exist a unique smooth function $s_{3i} \colon \mathcal{U} \to \mathbb{R}$ such that
$s_{3i}(\well_0) = \sss_{3i}$ and
\[
\Psi\circ 
\exp (s_{3i}(\ell)- s_{3 i}(\ell)) \vH3^{\sigma_{2 i}} \circ \cdots \circ 
\exp ((s_{31}(\ell)-\tau_2(\ell)) \vH3^{\sigma_{21}})\circ \mathcal{H}_{\tau_2(\ell)}(\ell) = 0 \quad \forall \ell\in \mathcal{U}.
\]
Moreover, the differential of $s_{3i}$ at $\well_0$ is given by
\begin{align*}
\langle d s_{3i} (\well_0),\delta \ell \rangle &= 
\frac{-1}{L_{g_3} \wh\psi_{\sss_{3i}}(\wbx_0)} \Big\{
L_{\pi_*\delta \ell} \wh\psi_{\sss_{3i}}(\wbx_0) -\langle d\tau_1(\well_0), \delta \ell \rangle \, L_{g_2-g_1} \wh\psi_{s_{3i}}(\wbx_0)\\
&-
\langle d\tau_2(\well_0), \delta \ell \rangle \, L_{g_3-g_2} \wh\psi_{s_{3i}}(\wbx_0)
\Big\},\qquad i=1,\ldots,n_3.
\end{align*}
\end{proposition}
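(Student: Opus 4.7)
The plan is to proceed by induction on $i$ and apply the implicit function theorem at each step, exactly as in Propositions~\ref{prop:s1}--\ref{prop:tau2}. The clean form of the stated differential, in which only $\tau_1$ and $\tau_2$ (and not the previously constructed $s_{31},\dots,s_{3,i-1}$) appear, will come from the observation that within the interval $\wI_3$ the \emph{projected} maximized flow is insensitive to the sign switches $\sigma_{2j}$, since these signs affect only the covector component of $\vH{3}^{\sigma_{2j}}$. Concretely, for $\ell$ close to $\well_0$ and $t\in[\tau_2(\ell),T]$ one has
\begin{equation*}
\pi\cH_t(\ell)=\exp\big((t-\tau_2(\ell))h_3\big)\circ\exp\big((\tau_2(\ell)-\tau_1(\ell))h_2\big)\circ\exp\big(\tau_1(\ell)h_1\big)(\pi\ell),
\end{equation*}
so the scalar equation $\Psi\circ\cH_s(\ell)=0$ reduces to $\psi\big(\pi\cH_s(\ell)\big)=0$ and depends on $\ell$ only through $\pi\ell$, $\tau_1(\ell)$ and $\tau_2(\ell)$.

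\textbf{Existence and smoothness.} For each $i$, I set $\Phi_i(s,\ell):=\psi\big(\pi\cH_s(\ell)\big)$, which is smooth in a neighborhood of $(\sss_{3i},\well_0)$ by the inductive hypothesis together with Propositions~\ref{prop:tau1} and~\ref{prop:tau2}. At this point $\Phi_i$ vanishes, while
\begin{equation*}
\partial_s\Phi_i(\sss_{3i},\well_0)=L_{h_3}\psi(\wxi(\sss_{3i}))=L_{g_3}\wpsi{\sss_{3i}}(\wbx_0)\neq 0
\end{equation*}
by Assumption~\ref{NT}. The implicit function theorem then produces a unique smooth $s_{3i}$ defined on a (possibly shrunken) neighborhood $\cU$ with $s_{3i}(\well_0)=\sss_{3i}$.

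\textbf{Computation of the differential.} Differentiating the identity $\Phi_i(s_{3i}(\ell),\ell)\equiv 0$ at $\well_0$ yields
\begin{equation*}
L_{g_3}\wpsi{\sss_{3i}}(\wbx_0)\,\langle ds_{3i}(\well_0),\dl\rangle+L_{d\pi\cH_{\sss_{3i}}(\well_0)\dl}\,\psi(\wxi(\sss_{3i}))=0.
\end{equation*}
Differentiating the explicit composition displayed above and mimicking the piecewise-linearization computation leading to \eqref{eq: linear tau2}, the vector $d\pi\cH_{\sss_{3i}}(\well_0)\dl$ equals
\begin{equation*}
\wS{\sss_{3i}*}\Big(\pi_*\dl+\langle d\tau_1(\well_0),\dl\rangle(g_1-g_2)(\wbx_0)+\langle d\tau_2(\well_0),\dl\rangle(g_2-g_3)(\wbx_0)\Big).
\end{equation*}
Pulling $\psi$ through $\wS{\sss_{3i}}$ via $\wpsi{t}=\psi\circ\wS{t}$ (so that $L_{\wS{t*}X}\psi(\wxi(t))=L_X\wpsi{t}(\wbx_0)$) converts each Lie derivative into one at $\wbx_0$, and the sign-flips $L_{g_1-g_2}=-L_{g_2-g_1}$, $L_{g_2-g_3}=-L_{g_3-g_2}$ then give exactly the stated formula.

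\textbf{Main obstacle.} The only subtlety is justifying that the interior switching times $s_{3j}$ for $j<i$ contribute nothing to the differential, despite appearing nested inside the composition that defines $\Phi_i$. This is settled once and for all by the observation that $\pi_*\vH{3}^{\sigma_{2j}}=h_3$ independently of $j$, so that the interior concatenations telescope to a single $h_3$-flow on the projection, regardless of any variation of the $s_{3j}$. With this remark in hand, the argument reduces to a direct adaptation of Propositions~\ref{prop:tau1}--\ref{prop:tau2}.
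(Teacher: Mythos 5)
Your proof is correct and follows essentially the argument the paper intends: the paper gives no details for this proposition beyond asserting that it is "analogous" to Propositions~\ref{prop:s1}--\ref{prop:tau2}, i.e.\ an implicit-function-theorem argument whose nondegeneracy is supplied by Assumption~\ref{NT}, and your computation of the differential via the telescoped projection $\pi\cH_t$ (which depends on $\ell$ only through $\pi\ell$, $\tau_1(\ell)$, $\tau_2(\ell)$) matches the stated formula. Your explicit remark that $\pi_*\vH{3}^{\sigma_{2j}}=h_3$ for every $j$, so the interior zeros $s_{3j}$ drop out of the projected flow and hence out of the differential, is exactly the point the paper leaves implicit.
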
 
\section{Details on the Hamiltonian Form of the Second Variation} \label{app H}
Consider the Hamiltonian function $G_1\se$ defined in \eqref{G1}, and compute its associated Hamiltonian vector field  
\[
\begin{pmatrix}
- \liederder{\cdot}{g_1}{\left( \wh\beta + \displaystyle\int_{\wI_3} \abs{\wh\psi_s}ds\right)}
- \sigma_{0n_1}\lieder{\cdot}{\wh\psi_{\wtau_{1}}} 
-2\displaystyle\sum_{i=1}^{n_3}\sigma_{2i} 
\lieder{\cdot}{\wh\psi_{\wh s_{3i}}} \dfrac{ \lieder{g_1}{\wh\psi_{\wh s_{3i}}}
}
{
\lieder{g_3}{\wh\psi_{\wh s_{3i}}}
}  \\
g_1(\wbx_0) 
\end{pmatrix} .
\]
Applying the antisymplectic isomorphism $\iota$ to the vector here above we obtain
\begin{multline} \label{eq:vvG1}
 \liederder{\cdot}{g_1}{\left( \wh\beta + \displaystyle\int_{\wI_3} \abs{\wh\psi_s}ds\right)}
+ \sigma_{0n_1}\lieder{\cdot}{\wh\psi_{\wtau_{1}}} \\%
+ D^2\left( \wh\beta + \displaystyle\int_{\wI_1 \cup \wI_3} \abs{\wh\psi_s}ds\right)(\wbx_0)[g_1, \cdot]
- 2 \displaystyle\sum_{i=1}^{n_1}\sigma_{0i} 
\lieder{\cdot}{\wh\psi_{\wh s_{1i}}}  
\end{multline}
for the component in $T^*_{\wbx_0}M$, and $g_1(\wbx_0)$ for the component in $T_{\wbx_0}M$. \\
Adding and subtracting $D^2\alpha(\wbx_0)[g_1, \cdot]$ and noticing that $d \left(\alpha + \wh\beta  + \!\!\displaystyle\int\limits_{\wI_1 \cup \wI_3} \!\! \abs{\wh\psi_s}ds\right) (\wbx_0) = 0$ 
by PMP, we obtain that 
\[
 D^2\left(\alpha +  \wh\beta + \displaystyle\int_{\wI_1 \cup \wI_3} \abs{\wh\psi_s}ds\right)(\wbx_0)[g_1, \cdot]
 = \liederder{\cdot}{g_1}{\left(\alpha + \wh\beta + \displaystyle\int_{\wI_1 \cup \wI_3} \abs{\wh\psi_s}ds\right)}
\]
so that \eqref{eq:vvG1} reduces to
\begin{align}
& \liederder{\cdot}{g_1}{\left( -\alpha - \int_{\wI_1} \abs{\wh\psi_s}ds \right) }
+ \sigma_{0n_1}\lieder{\cdot}{\wh\psi_{\wtau_{1}}}
- 2 \displaystyle\sum_{i=1}^{n_1}\sigma_{0i} 
\lieder{\cdot}{\wh\psi_{\wh s_{1i}}}  =  \\%
 &= - d\alpha(\wbx_0) Dg_1(\wbx_0) (\cdot) - \sigma_{01}\lieder{\cdot}{\wh\psi_{0}}
= - d\alpha(\wbx_0) Dh_1(\wbx_0) (\cdot) - \sigma_{01}\lieder{\cdot}{\psi}.%
\end{align}
This proves that the Hamiltonian vector field associated with $G_1\se$ is $\iota^{-1}\vH{1}^{\sigma_{01}}$, i.e.~$\vGse{1}$.

Analogous computations show that applying the antisymplectic isomorphism $\iota$ to the vector field associated with $G_2\se$ we obtain
\begin{equation}
\label{eq:vvG2}
\begin{pmatrix}
-d\alpha(\wbx_0)Dg_2(\wbx_0) -
\liederder{\cdot}{g_2}{\displaystyle\int_{\wI_1} \abs{\wh\psi_s}ds}
- \displaystyle\sum_{i=1}^{n_1}\sigma_{0i}
\dfrac{\lieder{\cdot}{\wh\psi_{s_{1i}}} \lieder{g_2}{\wh\psi_{s_{1i}}}}
{\lieder{g_1}{\wh\psi_{s_{1i}}}}
\\
g_2(\wbx_0) 
\end{pmatrix} .
\end{equation}
Let us choose a coordinate system $(p, q)$ in $T^*M$ and let
$(p_0, q_0)$ be some point.
Since 
\begin{equation}
\label{eq:flussocoord}
\cH_{\wtau_1}\begin{pmatrix}
p_0\\q_0
\end{pmatrix}
= 
\begin{pmatrix}
\left( p_0 + \displaystyle\int_{\wI_1} d\abs{\psi_s}ds\right)\wh S^{-1}_{\wtau_1 *} \\
 \wh S_{\wtau_1}(q_0) 
\end{pmatrix},
\end{equation}
applying \eqref{eq:trasportoflusso} to $\vH{2}\circ\cH_{\wtau_1}$ we get the proof of \eqref{eq:Gsecondo} for $i=2$. The proof for $i=3$ follows the same lines.

We now want to compute the terms of the kind $G_i\se(\vGse{j})$, for $i,j=1,2,3$, as they are needed to prove equation \eqref{eq:JGG}.
It is sufficient to compute only three of these terms, since, as we will see, they are antisymmetric 
in $i,j$.

By straightforward computations it is easy to see that 
\[
G_2\se(\vGse{1})=
L_{[g_1,g_2]} \Big(\wh \beta+\int_{\wI_3} \big|\wh \psi_s\big| ds\Big)(\wbx_0) -
a_0 (-1)^{n_1} L_{g_2} \wh \psi_{\wtau_1} (\wbx_0)
\]
Moreover 
\[
\dueforma{\vH{1}^{\sigma_{0 \,n_1+1}}}{\vH{2}}(\well_1) = -u_1 \langle \well_1, [f_0,f_1]\rangle - \sigma_{0 n_1} \lieder{g_2}{\wh\psi_{\wtau_1}}.
\]
Applying \eqref{eq:flussocoord} and using the fact that 
$
d \left(\alpha + \! \displaystyle\int_{\wI_1} \abs{\wh\psi_s}ds\right) (\wbx_0) =  - d \left(\wh\beta + \! \displaystyle\int_{\wI_3} \abs{\wh\psi_s}ds\right) (\wbx_0),
$
(again by PMP) we get  the following equality:
\begin{equation}
G_2\se(\vGse{1})=
- \dueforma{\vH1^{\sigma_{0\,n_1+1}}}{\vH2}(\well_1)
  \label{G2G1}
  \end{equation}
  
Analogous computations prove the following identities:

\begin{align}
G_3\se(\vGse{2})
&=
L_{[g_2,g_3]} \Big(\wh \beta+\int_{\wI_3} \big|\wh \psi_s\big| ds\Big)(\wbx_0) +
a_2 L_{g_2} \wh \psi_{\wtau_2} (\wbx_0)\nonumber\\
&=- \dueforma{\vH2}{\vH3^{\sigma_{21}}}(\well_2)
 \label{G3G2} \\
 G_3\se(\vGse{1})
&=
L_{[g_1,g_3]} \Big(\wh \beta+\int_{\wI_3} \big|\wh \psi_s\big| ds\Big)(\wbx_0) -
a_0(-1)^{n_1} L_{g_3} \wh \psi_{\wtau_1} (\wbx_0) + a_2 L_{g_1} \wh \psi_{\wtau_2} (\wbx_0)\nonumber\\
&=- \dueforma{\widehat{\mathcal{H}}_{\wtau_2*}\widehat{\mathcal{H}}_{\wtau_1*}^{-1}\vH1^{\sigma_{1\, n_1+1}}}{\vH3^{\sigma_{21}}}(\well_2)
 \label{G3G1} 
\end{align} 

\medskip
We end this section adding some useful formulas, that can be recovered by tedious but simple computations. The one-form
$\omega_0(\delta x,\cdot) $ such that
 $\iota^{-1}d\alpha_*\dx = \left(\omega_0(\delta x,\cdot), \dx \right)$ is equal to 
\begin{align*}
\omega_0(\delta x,\cdot) &=  -D^2\Big( \alpha+\wh \beta +\int_{\wI_1\cup \wI_3} \big|\wh \psi_s\big| ds\Big) (\wbx_0)[\delta x,\cdot] \\
&- 2 a_0
\sum_{i=1}^{n_1} (-1)^i \dfrac{ 
\lieder{\delta x}{\wh\psi_{\wh s_{1i}}} \lieder{\cdot}{\wh\psi_{\wh s_{1i}}}}
{\lieder{g_1}{\wh\psi_{\wh s_{1i}}}}-
2 a_2 \sum_{i=1}^{n_3} (-1)^i \dfrac{ 
\lieder{\delta x}{\wh\psi_{\wh s_{3i}}} \lieder{\cdot}{\wh\psi_{\wh s_{3i}}}}
{\lieder{g_3}{\wh\psi_{\wh s_{3i}}}}. 
\end{align*}
Thanks to this, we can prove that
\begin{align} 
 \scal{d\tau_1(\lo)}{d\alpha_*\dx} &= \dfrac{-1}{G_2\se(\vGse{1})}
\left(
G_2\se  - G_1\se
\right)(\omega_0(\dx), \dx) \label{G2-G1}\\ 
 \scal{d\tau_2(\lo)}{d\alpha_*\dx} &= \dfrac{-1}{G_3\se(\vGse{2})}
\left(
G_3\se  - G_2\se
\right)\left( (\omega_0(\dx), dx) 
- \scal{d\tau_1(\lo)}{d\alpha_*\dx}(\vGse{2} - \vGse{1})
\right)
\end{align}
and we can compute the expression of the bilinear symmetric form associated with
\begin{align*}
\mathfrak{J}[\delta e,\delta f]&=
\dfrac{1}{2}\Big(
\gamma\se[\dx, \dy]+\scal{\omega_0(\dx)}{\dy}  + \eta_1 G\se_1(\omega_0(\dx), \dx)  
+\eta_2 G\se_2\big( (\omega_0(\dx), \dx) + \ep_1 \vGse{1} \big)\\
&+\eta_3 G\se_3\big( (\omega_0(\dx), \dx) + \ep_1 \vGse{1} + \ep_2 \vGse{2}\big)
\Big) ,
\end{align*}
where 
\[
\gamma\se[\dx, \dy] := \scal{-\omega_0(\dx)}{\dy}
\]
(see \cite{PoggTo} for more details).

\begin{acknowledgements}
This work was supported by  the projects ``Dinamiche non autonome, sistemi Hamiltoniani e teoria del controllo'' (GNAMPA 2015), 
``Dinamica topologica, sistemi Hamiltoniani
e teoria del controllo'' (GNAMPA 2016) and by UTLN - Appel \`a projet
``Chercheurs invit\'es''.
\end{acknowledgements}

%

\end{document}